\newcommand*{\mailto}[1]{\href{mailto:#1}{\nolinkurl{#1}}}
\numberwithin{equation}{section}
\newtheorem{example}{Example}[section]
\newtheorem{definition}[example]{Definition}
\newtheorem{theorem}[example]{Theorem}
 \newtheorem{proposition}[example]{Proposition}
\newtheorem{lemma}[example]{Lemma}
\newtheorem{remark}[example]{Remark}
\newtheorem*{maintheorem*}{Main Theorem}
\numberwithin{equation}{section}
\renewcommand{\i}{\ifmmode\mathit{\mathchar"7010 }\else\char"10 \fi}
\renewcommand{\j}{\ifmmode\mathit{\mathchar"7011 }\else\char"11 \fi}
\newcommand{\R}{\mathbb{R}}
\DeclareMathOperator*{\sign}{sign}
\newcommand{\sgn}[1]{\sign\left(#1\right)}
\DeclareMathOperator*{\dvv}{div}
\newcommand{\dv}[1]{\dvv\left(#1\right)}
\newcommand{\abs}[1]{\left|#1\right|}
\newcommand{\absb}[1]{\bigl|#1\bigr|}
\newcommand{\norm}[1]{\left\|#1\right\|}
\newcommand{\test}{\varphi}
\newcommand{\vfi}{\varphi}
\newcommand{\seq}[1]{\left\{#1\right\}}
\newcommand{\px}{\partial_x}
\newcommand{\pt}{\partial_t}
\newcommand{\eps}{\varepsilon}
\newcommand{\ua}{u_\alpha}
\newcommand{\lipnorm}[1]{\abs{#1}_{\mathrm{Lip}}}
\newcommand{\lenloc}{L^1_{\mathrm{loc}}}
\DeclareMathOperator{\aud}{div}
\newcommand{\auda}{\aud_\alpha^\Phi\!}
\newcommand{\audaa}{\aud^{\Phi_\alpha}\!}
\newcommand{\audp}{\aud^\Phi \!}
\newcommand{\EO}{\mathrm{EO}}
\newcommand{\bn}{\bm{n}}
\newcommand{\bx}{\bm{x}}
\newcommand{\by}{\bm{y}}
\newcommand{\bz}{\bm{z}}
\newcommand{\ob}[1]{\overline{#1}}
\newcommand{\ak}{\alpha_k}
\newcommand{\uak}{u_{\ak}}
\newcommand{\loc}{\operatorname{loc}}
\DeclareMathOperator*{\essup}{ess\,sup}
\DeclareMathOperator*{\essinf}{ess\,inf}
\begin{document}

\title[Upwind filtering of conservation laws]
{Upwind filtering of scalar conservation laws}

\author[Coclite]{G. M. Coclite} \author[Karlsen]{K. H. Karlsen}
\author[Risebro]{N. H. Risebro}

\address[Giuseppe Maria Coclite] {Dipartimento di Meccanica,
  Matematica e Management, Politecnico di Bari, Via E. Orabona 4 --
  70125 Bari, Italy}
\email[]{\mailto{giuseppemaria.coclite@poliba.it}}

\address[Kenneth Hvistendahl Karlsen, Nils Henrik Risebro] {Department
  of Mathematics, University of Oslo, Postboks 1053, Blindern -- 0316
  Oslo, Norway} \email[]{\mailto{kennethk@math.uio.no},
  \mailto{nilshr@math.uio.no}}

\date{\today}

\subjclass[2020]{Primary: 35L65, 35D30; Secondary: 35B35}

\keywords{Filtered conservation law, non-montone flux, 
upwind splitting, entropy inequality, well-posedness, 
continuous dependence estimate, zero-filter limit}

\begin{abstract}
We study a class of multi-dimensional non-local 
conservation laws of the form
$\pt u =\audp \bm{F}(u)$, where the standard local divergence
($\aud$) of the flux vector $\bm{F}(u)$ is replaced by an average
upwind divergence operator $\audp$ acting on the flux along a
continuum of directions given by a reference measure and a filter
$\Phi$.  The non-local operator $\audp$ applies to a general
non-monotone flux $\bm{F}$, and is constructed by decomposing
the flux into monotone components according to wave speeds
determined by $\bm{F}'$.  Each monotone component is then
consistently subjected to a non-local derivative operator that
utilizes an anisotropic kernel supported on the ``correct" half of
the real axis.  We establish well-posedness, derive a priori and
entropy estimates, and provide an explicit continuous dependence
result on the kernel.  This stability result is robust with respect
to the ``size" of the kernel, allowing us to specify $\Phi$ as a
Dirac delta $\delta_0$ to recover entropy solutions of the local
conservation law $\pt u =\aud \bm{F}(u)$ (with an error estimate).
Other choices of $\Phi$ (and the reference measure) recover known
numerical methods for (local) conservation laws.
This work distinguishes itself from many others in the field by 
developing a consistent non-local approach capable of 
handling non-monotone fluxes.
\end{abstract}

\maketitle

\tableofcontents

\section{Introduction}\label{sec:intro}

In recent years, the study of non-local conservation laws has gained
much attention, driven by their wide applicability across various
domains such as sedimentation processes \cite{Betancourt:2010kx},
supply chain dynamics \cite{Colombo:2011aa}, and models for pedestrian
flow \cite{Colombo:2012aa,Piccoli:2011aa} and vehicular traffic
\cite{Chiarello:0aa}, to name just a few examples.  The mathematical
analysis of conservation laws with non-local fluxes has also advanced
significantly, with notable progress in understanding existence,
uniqueness, stability, and the convergence of numerical methods.
While providing an exhaustive list of references is beyond the scope
of this work, we highlight the contributions in
\cite{BlandinGoatin:16,Coclite:2017aa,Crippa:2013aa,
  Du:2017aa,Keimer:2017aa,Keimer:2018aa,Keimer:2018ab,
  Kloeden:2016aa,Lorenz:2020aa} and refer the reader to the references
therein for additional details.  Very recently, significant attention
has also been devoted to the surprisingly tricky singular limit
problem of establishing the strong convergence of non-local
conservation laws to their local counterparts, a topic explored
extensively in \cite{Bressan:2020aa,Bressan:2021aa,
  Coclite:2023ab,Coclite:2021aa, Colombo:2021aa,Colombo:2019aa,
  Keimer:2019aa,Zumbrun:1999aa}.

In the context of traffic flow modeling, many researchers (see, e.g.,
\cite{BlandinGoatin:16,ChiarelloGoatin:18,
  FriedrichEtal:18,GoatinScialanga:16}) have investigated non-local
generalizations of the classical Lighthill-Whitham-Richards (LWR)
model \cite{Lighthill:1955aa,Richards:1956aa}.  These non-local
extensions incorporate the look-ahead distance of drivers, presumably
offering a more accurate representation of driver behavior.  Some
approaches assume that drivers react to the average downstream traffic
\textit{density}, while others model their response to the average
downstream \textit{velocity}. 
The resulting non-local LWR models take the form:
\begin{equation}\label{eq:non-localCL-intro}
  \partial_t u
  +\partial_x \left(u V\left(A_\alpha^+u\right)\right) = 0,
  \qquad 
  \partial_t u + \partial_x \left(u A_\alpha^+(V(u))\right) = 0,
\end{equation}
where $V = V(u)$ is a given decreasing speed function.  For an
integrable function $v = v(x)$, the non-local 
operator $A_\alpha^+$ is defined as
\begin{equation}\label{eq:1D-average}
	A_\alpha^+v(x) := \int_0^\infty
	\Phi_\alpha(y) v(x+y)\,dy.
\end{equation}
The kernel (filter) $\Phi_\alpha$ characterizes the non-local
influence, with its shape controlled by the ``filter size'' parameter
$\alpha > 0$.  The kernel $\Phi_\alpha$ is a non-negative,
non-increasing function defined on the non-negative real numbers, and
it is normalized to have unit mass.  This formulation effectively
captures the impact of \textit{downstream} traffic on driver behavior
while preserving mathematical consistency with the LWR model.

By setting 
\begin{equation}\label{eq:Jplus}
	J^+_\alpha(x):=
	\Phi_\alpha(-x)\chi_{(-\infty,0]}(x), 	
\end{equation}
the function $A_\alpha^+v(x)$ can be expressed as the convolution
$(J^+_\alpha \star v)(x)$.  The family of \textit{anisotropic} kernels
$\{J^+_\alpha(x)\}_{\alpha > 0}$ acts as an approximate identity on
$\R$, although it is discontinuous at $x = 0$. In the formal
limit $\alpha \to 0$ (referred to as the \textit{zero-filter} limit),
the non-local fluxes $u V\left(A_\alpha^+{u}\right)$ and
$u A_\alpha^+{V(u)}$ reduce to the local 
flux $u V(u)$, recovering the original LWR equation

In \cite{Coclite:2023aa}, we introduced a non-local
Follow-the-Leader model for traffic flow in \emph{Lagrangian coordinates},
where the velocity of each vehicle is influenced by the surrounding
downstream traffic density.  This approach employs a \emph{weighted harmonic
mean} to model the non-local interactions, resulting in a non-local
Lagrangian PDE for the spacing between vehicles, represented as
$y = 1/u$ (the road length per car).  
The macroscopic equation takes the non-local form
\begin{equation}\label{eq:y-nofilter}
	\partial_t y - \partial_z W\left(A_\alpha^+y\right) = 0, 
	\quad
	 W(y) := V\left(\frac{1}{y}\right),
\end{equation}
where $\Phi_\alpha(\zeta) =\frac{1}{\alpha}\Phi(\frac{\zeta}{\alpha})$
is a filter characterizing the non-locality and ensuring that the
weighted harmonic mean is applied consistently; indeed, as the filter
size $\alpha$ tends to zero, the model recovers the local Lagrangian
PDE $\partial_t (1/u) - \partial_z V(u) = 0$, the Lagrangian
formulation of the LWR model \cite[Section 5]{Coclite:2023aa}.

Importantly, the filtered variable $w := A_\alpha^+{y}$ satisfies the
non-local conservation law
\begin{equation}\label{eq:1D-filter-prev}
  \partial_t w = \partial_z A_\alpha^+({W(w)}),
\end{equation}
which incorporates the weighted harmonic mean and differs from more
conventional non-local models like \eqref{eq:non-localCL-intro}.  This
model and its analysis in \cite{Coclite:2023aa}, like most
traffic-inspired non-local (scalar) equations, fundamentally rely on
the monotonicity of the flux function $W(\cdot)$, which is a realistic
assumption in traffic flow modeling as the speed function $V(\cdot)$
is typically non-increasing.

In this paper, we aim to broaden the modeling framework based on
\eqref{eq:1D-average} by developing a non-local conservation law that
accommodates general non-monotone flux functions.  Furthermore, we
extend the model to multiple spatial dimensions, expanding its
applicability to areas beyond traffic flow.
The innovative aspect of our work lies in its consistent 
non-local framework, which effectively resolves the challenges 
posed by non-monotone fluxes, a limitation in current 
non-local models inspired by traffic flow (see the 
previously mentioned references).

For a scalar function $f$, we use $f^\pm$ 
to denote respectively the increasing and decreasing parts of $f$, 
as defined in \eqref{eq:EO-split}. Consider a flux vector $\bm{F}(u)$. 
The functions
$$
\frak{f}^\pm(\by,u)
=\pm\left[\bm{F}\cdot \by\right]^\pm(u),
$$
represent the increasing ($+$) and decreasing ($-$) components of the
flux $\bm{F}$ along the direction $\by \in \R^d$.  The non-local
conservation law model proposed in this paper takes the form
\begin{equation}\label{eq:non-local-CL}
	\pt u =\audp \bm{F}(u), 
	\quad x\in \R^d, \,\, t>0,
\end{equation}
where, given a filter $\Phi$, the 
so-called \emph{average upwind divergence} 
operator $\audp$ is defined by
\begin{equation}\label{eq:non-local-CL2}
  \begin{split}
    \audp \bm{F}(u)(\bx,t) & =
    \int_{\R^d}\frac{\Phi'(\abs{\by})}{\abs{\by}^d}
    \Bigl\{\frak{f}^+(\by,u(\bx,t))-\frak{f}^+(\by,u(\bx+\by,t)) 
    \\ &
    \qquad\qquad\qquad\quad +\frak{f}^-(\by,u(\bx,t))
    -\frak{f}^-(\by,u(\bx-\by,t))\Bigr\} \,d\bm{w}(\by),
  \end{split}
\end{equation}
for $\bx\in\R^d$ and $t>0$.  Here, $\bm{w}$ 
is a Radon measure that, 
in radial coordinates, is expressed as
$d\bm{w}(\by) = r^{d-1} drdw(\theta)$ with $\by=r
\bm{n}_\theta$. The measure $w$, defined on the unit sphere $S^{d-1}$,
is carefully normalized so that the weighted average of directional
contributions corresponds to the identity matrix.  This normalization
ensures that specific directions can be emphasized without introducing
bias to the overall contributions across all spatial dimensions (see
\eqref{eq:scaling}), making the model consistent with the local
conservation law
\begin{equation}\label{eq:local-CL}
  \pt u =\aud\bm{F}(u),
  \quad x\in \R^d, \,\, t>0.
\end{equation}

The non-local conservation law defined by \eqref{eq:non-local-CL} and
\eqref{eq:non-local-CL2} models the evolution of a scalar quantity
$u(\cdot,t)$, incorporating spatial interactions through the average
upwind divergence operator $\audp$.  This operator uses a kernel
$\Phi$, a non-negative, non-increasing function on $[0,\infty)$ with
unit mass and finite first moment, to define the filtering effect, and
a measure $\bm{w}$ that emphasizes specific spatial directions.

When considering new non-local models, ensuring the consistency with
their local counterparts is essential but nontrivial: Does replacing
the filter with a Dirac delta lead to the corresponding local
conservation law?  For equations of the form (from the first part of
\eqref{eq:non-localCL-intro})
\begin{equation}\label{eq:standard-model}
  \partial_t u_\alpha + \partial_x 
  \left(u_\alpha V(u_\alpha \star J_\alpha)\right) = 0,
\end{equation}
where the velocity part of the flux depends on the convolution of
$u_\alpha$ with $J_\alpha$, the formal singular limit replacing
$J_\alpha$ with a Dirac delta $\delta_0$ leads to the local equation
\eqref{eq:local-CL} with flux $\bm{F}(u) = uV(u)$ (and
$d=1$). However, counterexamples \cite{Colombo:2019aa} show that
strong convergence of solutions is not guaranteed for general 
$V$. Recent results \cite{Coclite:2021aa,Colombo:2021aa} 
demonstrate that if $V(\cdot)$ is monotone non-increasing and the (convex)
convolution kernel $J_\alpha$ is supported on $(-\infty, 0]$ (as in
traffic flow modeling), the total variation of $u_\alpha(\cdot, t)$ is
uniformly bounded with respect to $\alpha$.  Furthermore, the
solutions $u_\alpha$ converge in $L^1_{\loc}$ to the unique entropy
solution of the local conservation law as $\alpha \to 0$, with entropy
solutions understood in the sense of \cite{Kruzkov:1970kx}.

Similar to \eqref{eq:standard-model}, convolution-based conservation
laws of the form \eqref{eq:1D-filter-prev} also face consistency
issues. However, in \cite{Coclite:2023aa}, we rigorously verified the
zero-filter limit for \eqref{eq:1D-filter-prev}, provided the flux
$W(\cdot)$ is monotone (non-increasing).

For our multidimensional model \eqref{eq:non-local-CL} and
\eqref{eq:non-local-CL2} with a general non-monotone flux $\bm{F}$,
consistency with \eqref{eq:local-CL} (i.e., the zero-filter limit)
will be established due to the upwind nature of the non-local
divergence operator $\audp$ and an anisotropic filter $\Phi$ supported
on the ``correct" half of $\R$. The flux components $\frak{f}^\pm$
capture increasing and decreasing contributions to the flux vector
$\bm{F}$, respectively, with interactions at a spatial point $\bx$
adjusted by upwind shifts $\pm \by$ based on monotonicity.  The
operator $\audp$ aggregates non-local flux differences over all
directions and distances, weighted by the kernel $\Phi$ and the
measure $\bm{w}$.  Overall, the new equation \eqref{eq:non-local-CL} is
designed to capture anisotropic and directional non-local effects in
general multi-dimensional transport-type dynamics.

Equation \eqref{eq:non-local-CL} can be interpreted as an
infinite-dimensional system of ODEs, from which the existence and
uniqueness of (Lipschitz) solutions follow naturally. We will
demonstrate that the structure of the non-local divergence operator
$\audp$ ensures that solutions of \eqref{eq:non-local-CL} dissipate all
convex entropies by deriving a precise entropy balance equation. These
non-local entropy inequalities are then employed to establish a key
$L^1$ stability or continuous dependence estimate with respect to
perturbations in the initial data and the filter.  Using the stability
estimate, we derive a series of uniform $L^1$, $L^\infty$, and $BV$
bounds for the solutions that are independent of the ``size" of the
filter $\Phi$.

Let $u^{(1)}$ and $u^{(2)}$ be solutions to \eqref{eq:non-local-CL}
corresponding to initial data $u_0^{(1)}$ and $u_0^{(2)}$, and filters
$\Phi^{(1)}$ and $\Phi^{(2)}$, respectively.  The continuous
dependence estimate reads
\begin{equation*}
  \begin{aligned}
    \norm{u^{(1)}(\cdot,t)-u^{(2)}(\cdot,t)}_{L^1(\R^d)}&\le
    \norm{u_0^{(1)}-u_0^{(2)}}_{L^1(\R^d)}\\
    &\qquad + C\sqrt{\int_0^\infty
      \abs{r\Phi^{(1)}(r)-r\Phi^{(2)}(r)}\,dr}.
  \end{aligned}
\end{equation*}  
Here, the constant $C$ depends on $t$ and the $BV$ seminorm of the 
initial data. This estimate represents the central
technical contribution of the paper and is robust in its explicit
dependence on the filter.

Specifically, it allows us to set $u_0^{(1)} = u_0^{(2)}$ and take
$\Phi^{(2)}$ as the Dirac delta $\delta_0$ (interpreting the product
$r\Phi^{(2)}(r)$ as the zero function), while choosing $\Phi^{(1)}$ as
the rescaled filter
$\Phi_\alpha(r):=\frac{1}{\alpha} \Phi\left(\frac{r}{\alpha}\right)$,
which approximates $\delta_0$.  In this setting, we recover the unique
entropy solution $u$ of the local conservation law
\eqref{eq:local-CL}, along with the error estimate
$$
\norm{u_\alpha(\cdot,t)-u(\cdot,t)}_{L^1(\R^d)} \leq C\sqrt{\alpha},
$$
where $u_\alpha$ denotes the solution of the non-local equation
\eqref{eq:non-local-CL} with $\Phi$ from \eqref{eq:non-local-CL2}
replaced by the rescaled filter $\Phi_\alpha$.

In the one-dimensional case ($d=1$), with $\bm{F} = f$, the average
upwind divergence $\audaa$ simplifies 
to (see Lemma \ref{lm:d=1op}):
\begin{equation}\label{eq:op11}
	\begin{split}
		\audaa{f(v)}(x) 
		& = \int_0^\infty \Phi_\alpha'(y)
		\bigl(f^+(v(x + y)) - f^+(v(x))\bigr) \, dy 
		\\ & \qquad 
		+ \int_{-\infty}^0 \Phi_\alpha'(-y) 
		\bigl(f^-(v(x+y)) - f^-(v(x))\bigr) \, dy,
	\end{split}
\end{equation}
for a function $v \in L^1(\R)$, where $\Phi_\alpha$ is, as before, the
rescaled version of the filter $\Phi$.  Formally, by applying
integration by parts, \eqref{eq:op11} can be 
rewritten in the suggestive form:
\begin{equation}\label{eq:suggestive}
	\audaa{f(u)}(x)
	= \partial_x A^+_\alpha(f^+(v))
	+\partial_xA^-_\alpha (f^-(v)),	
\end{equation}
where $A_\alpha^+$ is defined 
in \eqref{eq:1D-average} and
\begin{equation}\label{eq:A-}
	A_\alpha^-{v}(x) 
	= \int_{-\infty}^0 \Phi_{\alpha}(-\zeta)
	v(x+\zeta)\,d\zeta.
\end{equation}
Defining (compare with \eqref{eq:Jplus})
\begin{equation}\label{eq:Jmin}
	J^-_\alpha(x):=
	\Phi_\alpha(x)\chi_{(0,\infty)}(x),
\end{equation}
we obtain that $A_\alpha^-{v}(x) =(J^-_\alpha \star v)(x)$. 

The formulation \eqref{eq:suggestive} 
highlights that the resulting equation
\eqref{eq:non-local-CL} naturally extends the non-local equation
\eqref{eq:1D-filter-prev} introduced in \cite{Coclite:2023aa} (as a
traffic flow model) to the general case of non-monotone fluxes $f$. 
In the one-dimensional scenario involving a non-monotonic 
$f$, our non-local approach employs two upwind-adapted 
discontinuous kernels, $J^+_\alpha$ and $J^-_\alpha$, as specified 
in \eqref{eq:Jplus} and \eqref{eq:Jmin}. These kernels are used 
to approximate the nonlinear transport 
operator $\partial_x f(v)$ by
$$
\partial_x f(v)\approx 
\partial_x \left(J_\alpha^+\star f^+(v)\right)
+\partial_x \left(J_\alpha^-\star f^-(v)\right).
$$

If we choose $\Phi$ as the characteristic function of the interval
$[0,1]$ in \eqref{eq:suggestive}, the non-local equation
\eqref{eq:non-local-CL} transforms into:
\begin{equation*}
  \pt \ua(x,t)=
  \frac{1}{\alpha}\left(f^\EO(\ua(x,t),\ua(x+\alpha,t))
    -f^\EO(\ua(x-\alpha,t),\ua(x,t))\right),
\end{equation*}
which represents a semi-discrete formulation of the well-known
Engquist-Osher scheme for (local) conservation laws
\cite{EngquistOsher:81}.  By choosing different forms of $\Phi$, one
can derive alternative numerical schemes.  Appropriate choices of the
filter $\Phi$ and the measure $\bm{w}$ in \eqref{eq:non-local-CL},
\eqref{eq:non-local-CL2} can produce finite volume schemes in the
multi-dimensional case.

\medskip

The remainder of this paper is organized as follows: In Section
\ref{sec:rotat}, we introduce the necessary notation and provide a
precise definition of the average upwind divergence operator. Section
\ref{sec:main} focuses on the well-posedness of the non-local model,
deriving a priori estimates and establishing the continuous dependence
estimate. In Section \ref{sec:filtlim}, we examine the zero-filter
limit.  Section \ref{sec:examp} discusses various examples of
equations and numerical methods derived from the non-local equation.
Finally, Section \ref{sec:bonus} contains material that 
offers additional perspectives on the approach.

\section{Average upwind divergence}
\label{sec:rotat}

We begin by defining the notation to be used and the averaged upwind
divergence operator.  For a continuous function $f:\R\to\R$, we denote
the increasing and decreasing parts of $f$ as $f^+$ and $f^-$,
respectively; more precisely,
\begin{equation}\label{eq:EO-split}
  f^+(u)=\int_0^u \max\seq{0,f'(s)}\,ds,
  \qquad  
  f^-(u)=\int_0^u \min\seq{0,f'(s)}\,ds.
\end{equation}
Clearly, we have $f=f^++f^-$.

In what follows, let $\bm{F}:\R\to\R^d$ be a locally Lipschitz
continuous (flux) function satisfying $\bm{F}(0)=\bm{0}$, where
$\bm{F}$ has components
\begin{equation*}
  \bm{F}=\begin{pmatrix}f_1\\ 
    \vdots\\ f_d\end{pmatrix}.
\end{equation*}

Let the functions $\frak{f}^\pm$ be defined as
\begin{equation}\label{eq:frakfdef}
  \frak{f}^\pm(\by,u)=\pm\left[\bm{F}
    \cdot \by\right]^\pm(u),
\end{equation}
for $\by\in\R^d$ and $u\in\R$.  In the following, $S^{d-1}$ denotes
the $(d-1)$-dimensional boundary of the unit ball in $\R^d$.  For a
point $\theta\in S^{d-1}$, $\bm{n}_\theta$ denotes the outward
pointing unit normal at $\theta$, so that for any $\by\in\R^d$, we can
write $\by=\abs{\by}\bm{n}_\theta$. We shall frequently use the
identity
\begin{equation*}
  \frak{f}^\pm(\by,u)=\abs{\by}
  \frak{f}^\pm(\bm{n}_\theta,u).
\end{equation*}

Throughout the paper, we will assume that the filter $\Phi$ is a
non-negative and non-increasing function defined on the positive real
numbers $\R^+$.  At various occasions, we will need some or all of the
following additional conditions:
\begin{align}
  \label{eq:Phi-r}
  \int_0^\infty \Phi(\xi)\,d\xi&=1,\\
  \label{eq:Phi-m}
  \int_0^\infty \Phi(\xi)\xi\,d\xi&<\infty,\\
  \Phi(0)&<\infty,\label{eq:Phi-zero}\\
  \intertext{as well as the weaker condition}
  \label{eq:Phi-zerolim}
  \lim_{\eps\to 0} \Phi(\eps)\eps &=0.
\end{align}
If $\Phi$ has a jump discontinuity at some $r_0>0$, such that the
limits $\Phi_{\pm}=\lim_{r\to r_0^\pm}\Phi(r)$ exist, then
$\Phi'(r_0)$ is interpreted as a Dirac measure located at $r_0$ with
mass $\Phi_+-\Phi_-$.

We shall be using the following lemma repeatedly
\begin{lemma}\label{lem:Phibp}
  Suppose $\Phi$ is a non-negative and non-increasing function on
  $\R^+$ that satisfies \eqref{eq:Phi-r} and \eqref{eq:Phi-zerolim}.
  Let $u=u(x)$ be a Lipschitz continuous function on $\R$. Then
  \begin{equation*}
    \frac{d}{dx} \int_0^\infty
    \Phi(r)u(x+r)\,dr =
    \int_0^\infty \Phi'(r)
    \left(u(x)-u(x+r)\right)\,dr.
  \end{equation*}
\end{lemma}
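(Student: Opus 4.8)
The plan is to prove the identity in two stages: differentiate under the integral sign, and then integrate by parts in the Stieltjes sense, carefully tracking the boundary contributions at $r=0$ and $r=\infty$.

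First I would set $G(x)=\int_0^\infty\Phi(r)\,u(x+r)\,dr$ and form the difference quotient
\[
\frac{G(x+h)-G(x)}{h}=\int_0^\infty\Phi(r)\,\frac{u(x+h+r)-u(x+r)}{h}\,dr.
\]
Since $u$ is Lipschitz, the integrand is bounded by $\lipnorm{u}\,\Phi(r)$, which lies in $L^1(\R^+)$ by \eqref{eq:Phi-r}; and for fixed $x$ the point $x+r$ is a differentiability point of $u$ for Lebesgue-a.e.\ $r$, hence for $\Phi(r)\,dr$-a.e.\ $r$. Dominated convergence then yields that $G$ is differentiable everywhere with $G'(x)=\int_0^\infty\Phi(r)\,u'(x+r)\,dr$, where $u'\in L^\infty$ is the a.e.\ derivative of $u$. (Here $G$ is finite because $u$ is bounded in the applications we have in mind, or, more generally, because $\Phi$ has a finite first moment \eqref{eq:Phi-m}; the right-hand side of the lemma is in any case well defined, since $\absb{u(x)-u(x+r)}\le\lipnorm{u}\,r$ and, as noted below, $r\,d(-\Phi)$ is a finite measure.)

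Next I would integrate by parts. Observe that a non-negative, non-increasing $\Phi\in L^1$ has $\Phi(r)\to0$ and in fact $r\Phi(r)\le 2\int_{r/2}^r\Phi(s)\,ds\to0$ as $r\to\infty$, and that $\int_{(0,\infty)}r\,d(-\Phi)(r)=\int_0^\infty\Phi(s)\,ds=1$ by Tonelli (write $r=\int_0^r ds$), so the measure $d(-\Phi)$ has a finite first moment. Choosing continuity points $\eps\downarrow0$ and $R\uparrow\infty$ of $\Phi$ and applying Lebesgue--Stieltjes integration by parts to the absolutely continuous function $r\mapsto u(x+r)$ against $\Phi$ on $(\eps,R)$, then adding and subtracting the constant $u(x)$, I expect to arrive at
\begin{multline*}
\int_\eps^R\Phi(r)\,u'(x+r)\,dr
=\Phi(R)\bigl(u(x+R)-u(x)\bigr)-\Phi(\eps)\bigl(u(x+\eps)-u(x)\bigr)\\
+\int_{(\eps,R)}\bigl(u(x)-u(x+r)\bigr)\,d\Phi(r).
\end{multline*}
Letting $\eps\to0$, $R\to\infty$, the left side converges to $G'(x)$, the boundary term at $R$ is $O(\lipnorm{u}\,R\,\Phi(R))\to0$, the boundary term at $\eps$ is $O(\lipnorm{u}\,\eps\,\Phi(\eps))\to0$ by \eqref{eq:Phi-zerolim}, and the last integral converges (dominated convergence with majorant $r\mapsto\lipnorm{u}\,r\in L^1(d(-\Phi))$) to $\int_{(0,\infty)}(u(x)-u(x+r))\,d\Phi(r)$, which is the right-hand side of the lemma once one reads the paper's $\int_0^\infty\Phi'(r)\psi(r)\,dr$ as the integral of $\psi$ against the Stieltjes measure $d\Phi$ (jumps of $\Phi$ becoming Dirac masses). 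Combining the two stages finishes the proof.

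The main obstacle, such as it is, lies in the bookkeeping of the integration by parts: one must (i) allow $\Phi'$ to carry atoms, which is why the argument is phrased with the Stieltjes measure $d\Phi$ and with continuity points $\eps,R$, and (ii) attribute each boundary term to the right hypothesis — integrability \eqref{eq:Phi-r} alone forces $r\Phi(r)\to0$ at infinity, while the vanishing of the contribution at the origin is exactly the content of \eqref{eq:Phi-zerolim}. Beyond this, only dominated convergence, Tonelli, and the Lipschitz bound on $u$ are needed.
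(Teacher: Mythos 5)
Your proposal is correct and follows essentially the same route as the paper: integrate by parts in $r$ over $(\eps,\infty)$, use $\abs{u(x)-u(x+\eps)}\le\lipnorm{u}\,\eps$ together with \eqref{eq:Phi-zerolim} to kill the boundary term at the origin, and identify $\int_0^\infty\Phi(r)u'(x+r)\,dr$ with $\frac{d}{dx}\int_0^\infty\Phi(r)u(x+r)\,dr$. You simply make explicit several points the paper leaves implicit (differentiation under the integral sign, the vanishing of the boundary term at infinity, and the Stieltjes interpretation of $\Phi'$ when $\Phi$ has jumps), which is a faithful elaboration rather than a different argument.
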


\begin{proof}
  We have
  \begin{equation*}
    \frac{d}{dr}\left(\Phi(r)
      \left(u(x)-u(x+r)\right)\right)
    =\Phi'(r)\left(u(x)-u(x+r)\right)-\Phi(r)u'(x+r).
  \end{equation*}
  Integrate this from $r=\eps$ to $r=\infty$ to get
  \begin{align*}
    -\Phi(\eps)\left(u(x)-u(x+\eps)\right)
    &=\int_\eps^\infty \Phi'(r)\left(u(x)-u(x+r)\right)\,dr
    \\ &\qquad -\int_\eps^\infty\Phi(r)u'(x+r)\,dr.
  \end{align*}
  The proof is concluded by observing that since $u$ is Lipschitz
  continuous and \eqref {eq:Phi-zerolim} holds, the limit of the
  left-hand side as $\eps\to 0$ is zero.
\end{proof}

Let $w$ be a non-negative Radon measure on $S^{d-1}$, scaled such that
\begin{equation}\label{eq:scaling}
  \int_{S^{d-1}} \left(\bm{n}_\theta\cdot 
    \bm{e}_i\right)\left(\bm{n}_\theta
    \cdot \bm{e}_j\right)\,dw(\theta)
    =\delta_{ij},
    \quad i,j =1,\ldots,d,
\end{equation}
where $\bm{e}_i$ denotes the $i$th unit basis vector in $\R^d$. The
measure $w$ will be used to emphasise certain spatial directions. Let
$\bm{w}$ denote the corresponding measure defined in radial
coordinates as follows:
\begin{equation*}
  \by=r \bm{n}_\theta,\qquad 
  d\bm{w}(\by)=r^{d-1}drdw(\theta). 
\end{equation*}

We call the mapping
\begin{equation}\label{eq:non-local-div}
  \begin{split}
    \bm{F}(u(\bx)) &\mapsto \int_{\R^d}
    \frac{\Phi'(\abs{\by})}{\abs{\by}^d} \Bigl\{\frak{f}^+(\by,u(\bx))
    -\frak{f}^+(\by,u(\bx+\by)) \\ &\hphantom{\int_{\R^d}
      \frac{\Phi'(\abs{\by})}{\abs{\by}^d} \Bigl\{}\qquad\quad
    +\frak{f}^-(\by,u(\bx)) -\frak{f}^-(\by,u(\bx-\by))\Bigr\}
    \,d\bm{w}(\by)\\
    &\qquad =: \audp \bm{F}(u)(\bx),
  \end{split}
\end{equation}
the \emph{average upwind divergence} of $\bm{F}(u(\bx))$.  Hence, the
term ``upwind filtering''.  The map $u\mapsto \audp \bm{F}(u)$ is well
defined for $u\in L^1(\R^d)$.

We conclude this section by demonstrating that 
in the one-dimensional case, the operator described 
in \eqref{eq:non-local-div} simplifies to the one in 
\eqref{eq:suggestive}. The one-dimensional case is 
distinctive because $S^0=\seq{1,-1}$, and 
the (counting) measure $w$ is 
characterized by $w(\pm1)=\frac{1}{2}$.

\begin{lemma}\label{lm:d=1op}
Suppose $\Phi$ is a non-negative and non-increasing function on
$\R^+$ that satisfies \eqref{eq:Phi-r} and \eqref{eq:Phi-zerolim}.
Let $u=u(x)$ be a Lipschitz continuous function on $\R$. 
For $d=1$, the operator \eqref{eq:non-local-div} simplifies to
\begin{equation}\label{eq:1D-operators}
	\begin{split}
		\audp\bm{F}(u)(x)
		&=\partial_x\int_0^\infty\Phi(r)f^+(u(x+r))\,dr
		\\ & \qquad 
		+\partial_x\int_{-\infty}^0\Phi(-r)f^-(u(x+r))\,dr
		\\ & =\partial_x \int_0^\infty
		\Phi(r)\Bigl(f^+(u(x+r))+f^-(u(x-r))\Bigr)\,dr.
	  \end{split}	
\end{equation}
\end{lemma}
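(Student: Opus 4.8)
\textbf{Proof plan for Lemma~\ref{lm:d=1op}.}
The plan is to unwind the multi-dimensional definition \eqref{eq:non-local-div} in the degenerate geometry $d=1$ until it becomes a scalar expression, and then to recognise the two one-sided integrals that appear as $x$-derivatives of filtered fluxes via Lemma~\ref{lem:Phibp}. First I would collect the one-dimensional data: $S^{0}=\{1,-1\}$, with outward normals $\bm{n}_{\pm}=\pm1$, and recall from just before the lemma the symmetric counting measure $w(1)=w(-1)=\tfrac12$ (consistent with \eqref{eq:scaling}, since $(\bm{n}_\theta\cdot\bm{e}_1)^2=1$ for both $\theta$). In radial coordinates $\by=r\bm{n}_\theta$ and $d\bm{w}(\by)=r^{d-1}dr\,dw(\theta)=dr\,dw(\theta)$. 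Using the homogeneity $\frak{f}^\pm(\by,u)=\abs{\by}\,\frak{f}^\pm(\bm{n}_\theta,u)=r\,\frak{f}^\pm(\bm{n}_\theta,u)$ recorded in Section~\ref{sec:rotat}, the factor $\abs{\by}^{-d}=r^{-1}$ in \eqref{eq:non-local-div} cancels against the $r$ produced by this homogeneity and the $r^{d-1}=1$ from $d\bm{w}$, so the integrand collapses to $\Phi'(r)\,dr$ times a combination of the four numbers $\frak{f}^{\pm}(\pm1,\cdot)$.

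Next I would evaluate those four numbers directly from \eqref{eq:EO-split}--\eqref{eq:frakfdef}. Computing $\max\{0,\pm f'\}$ and $\min\{0,\pm f'\}$ gives
\[
\frak{f}^+(1,u)=f^+(u),\qquad \frak{f}^-(1,u)=-f^-(u),\qquad \frak{f}^+(-1,u)=-f^-(u),\qquad \frak{f}^-(-1,u)=f^+(u).
\]
Substituting these into \eqref{eq:non-local-div} and keeping track of the upwind shifts (for $\theta=1$ one has $\bx+\by=x+r$ and $\bx-\by=x-r$; for $\theta=-1$ the roles of $\pm r$ are exchanged), the $\theta=1$ and $\theta=-1$ integrands turn out to be \emph{identical}, so the two weights $\tfrac12$ add to $1$ and one lands on
\[
\audp\bm{F}(u)(x)=\int_0^\infty\Phi'(r)\bigl(f^+(u(x))-f^+(u(x+r))\bigr)\,dr+\int_0^\infty\Phi'(r)\bigl(f^-(u(x-r))-f^-(u(x))\bigr)\,dr .
\]

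To the first integral I apply Lemma~\ref{lem:Phibp} verbatim, with the function ``$u$'' there taken to be $f^+\circ u$ (Lipschitz, since $f$ is locally Lipschitz with $(f^\pm)'$ bounded by $\abs{f'}$ and $u$ is Lipschitz and bounded in our setting); this gives $\partial_x\int_0^\infty\Phi(r)f^+(u(x+r))\,dr$. The second integral features a \emph{left} shift $u(x-r)$, which is not literally covered by Lemma~\ref{lem:Phibp}; I would handle it either by re-running the one-line integration by parts from the proof of that lemma with $u(x+r)$ replaced by $u(x-r)$ (the boundary term at $r=\eps$ still vanishes because $f^-\circ u$ is Lipschitz and $\Phi(\eps)\eps\to0$ by \eqref{eq:Phi-zerolim}), or, more economically, by the substitution $r\mapsto-r$, which rewrites $\int_0^\infty\Phi(r)f^-(u(x-r))\,dr=\int_{-\infty}^0\Phi(-r)f^-(u(x+r))\,dr$ and reduces the claim to Lemma~\ref{lem:Phibp} applied to $x\mapsto f^-(u(-x))$. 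Either way one obtains $\partial_x\int_{-\infty}^0\Phi(-r)f^-(u(x+r))\,dr$, giving the first displayed identity in \eqref{eq:1D-operators}; the second identity follows from the same change of variables $r\mapsto-r$ in the $f^-$ term. The only genuinely delicate point is the sign accounting for $\frak{f}^\pm(\pm1,\cdot)$ together with the bookkeeping of the upwind shifts $\bx\pm\by$ in \eqref{eq:non-local-div}; once those are pinned down, the rest is the cancellation of the radial weights and two invocations of (a trivial reflection of) Lemma~\ref{lem:Phibp}.
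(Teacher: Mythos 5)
Your proposal is correct and follows essentially the same route as the paper's proof: reduce the $d=1$ integral over $S^0=\{1,-1\}$ with weights $\tfrac12$, observe that the two directional contributions coincide (your explicit table of $\frak{f}^\pm(\pm1,\cdot)$ matches the paper's signs), and then convert the two one-sided $\Phi'$-integrals into $x$-derivatives by integration by parts, i.e.\ Lemma~\ref{lem:Phibp} together with its reflected version for the $f^-(u(x-r))$ term. The final change of variables $r\mapsto -r$ giving the second identity in \eqref{eq:1D-operators} is also exactly as in the paper.
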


\begin{proof}
To verify \eqref{eq:1D-operators}, 
we proceed as follows:
\begin{align*}
	& \audp\bm{F}(u)(x)
	\\ & \quad 
	= \int_{\{n_\theta=\pm 1\}}\int_0^\infty
	\Phi'(r) \Bigl((fn_\theta)^+(u(x))
	-(fn_\theta)^+(u(x+rn_\theta))
	\\ &\qquad\qquad\qquad\qquad\quad
	-(fn_\theta)^-(u(x))+(fn_\theta)^-(u(x-rn_\theta))\Bigr)
	\,drd\theta
	\\ & \quad
	= \frac{1}{2}\int_0^\infty
	\Phi'(r) \Bigl(f^+(u(x))-f^+(u(x+r))
	\\ &\qquad\qquad\qquad\qquad\quad
	-f^-(u(x)) +f^-(u(x-r))\Bigr)\,dr
	\\ & \quad\qquad 
	+\frac{1}{2}\int_0^\infty
	\Phi'(r) \Bigl(-f^-(u(x))+f^-(u(x-r))
	\\ &\qquad\qquad\qquad \qquad\qquad
	+f^+(u(x)) -f^+(u(x+r))\Bigr)\,dr
	\\ & \quad
	= \int_0^\infty
	\Phi'(r) \Bigl(f^+(u(x))-f^+(u(x+r))\Bigr)\,dr
	\\ & \quad \qquad 
	-\int_0^\infty\Phi'(r)\Bigl(f^-(u(x))-f^-(u(x-r))\Bigr)\,dr
	\\ & \quad 
	= \int_0^\infty \Phi'(r) \Bigl(f^+(u(x))-f^+(u(x+r))\Bigr)\,dr
	\\ & \quad\qquad
	-\int_{-\infty}^0 \Phi'(-r) \Bigl(f^-(u(x))-f^-(u(x+r))\Bigr)\,dr.
\end{align*}
Integrating by parts, we obtain
\begin{align*}
	\audp\bm{F}(u)(x)=\int_0^\infty
	\Phi(r)\px f^+(u(x+r))\,dr
	+\int_{-\infty}^0
	\Phi(-r)\px f^-(u(x+r))\,dr,
\end{align*}
which yields the first claim in \eqref{eq:1D-operators}.

By changing the variable in the second integral from $r$ to $-r$, 
both integrals are now on the interval $(0,\infty)$, 
which proves the second claim.
\end{proof}

\section{Upwind filtered conservation laws}
\label{sec:main}

Given the average upwind divergence operator \eqref{eq:non-local-div},
the Cauchy problem we shall study has the form
\begin{equation}\label{eq:rotfilter2}
  \begin{split}
    \partial_t u(\bx,t) & =\audp \bm{F}(u)(\bx) \\ & =\int_{\R^d}
    \frac{\Phi'(\abs{\by})}{\abs{\by}^d}
    \Bigl\{\frak{f}^+(\by,u(\bx,t)) -\frak{f}^+(\by,u(\bx+\by,t)) \\ &
    \qquad\qquad\qquad\qquad +\frak{f}^-(\by,u(\bx,t))
    -\frak{f}^-(\by,u(\bx-\by,t))\Bigr\} \,d\bm{w}(\by),
    \\
    u(\bx,0) & =u_0(\bx),
  \end{split}
\end{equation}
for $\bx\in\R^d$ and $t>0$, where the functions $\frak{f}^\pm$ 
are defined in \eqref{eq:frakfdef}.

In this section, we will first verify the well-posedness 
of the upwind filtered 
conservation law \eqref{eq:rotfilter2}.

\begin{proposition}\label{prop:L1exist}
  Suppose the filter $\Phi$ is a non-negative, non-increasing function
  on $\R^+$ that meets the condition \eqref{eq:Phi-zero}.  Assume that
  the function $u\mapsto \bm{F}(u)$ is (globally) Lipschitz continuous
  with Lipschitz constant $L$. If $u_0\in L^\infty(\R^d)$, then there
  exists a unique continuously differentiable solution
  $u(t)\in L^\infty(\R^d)$ to the Cauchy problem \eqref{eq:rotfilter2}
  on $[0,\infty)$.

  Furthermore, if $\lipnorm{u_0}<\infty$, then $u(t)$ is Lipschitz
  continuous in $\bx\in \R^d$, with the explicit bound
  \begin{equation*}
    \lipnorm{u(\cdot,t)}\le \lipnorm{u_0}
    \exp\left(4Lw(S^{d-1})
      \Phi(0)t\right).
  \end{equation*}
\end{proposition}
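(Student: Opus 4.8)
The plan is to read the Cauchy problem \eqref{eq:rotfilter2} as the abstract ODE
\[
  \tfrac{d}{dt}u(t)=\mathcal{A}\bigl(u(t)\bigr),\qquad u(0)=u_0,
\]
in the Banach space $X=L^\infty(\R^d)$, where $\mathcal{A}(u)(\bx):=\audp\bm{F}(u)(\bx)$, and then to apply the Cauchy--Lipschitz (Picard--Lindel\"of) theorem for ODEs in Banach spaces. The whole statement reduces to showing that $\mathcal{A}$ is a \emph{globally} Lipschitz map from $X$ into $X$; once that is done, the theorem immediately produces a unique global $C^1$ curve $u\colon[0,\infty)\to X$, and since $\bm{F}$ is assumed globally (not merely locally) Lipschitz, no a priori bound is needed to run the argument.

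First I would rewrite the operator in radial coordinates, exactly as in the proof of Lemma~\ref{lm:d=1op}: using $\frak{f}^\pm(\by,u)=\abs{\by}\,\frak{f}^\pm(\bm{n}_\theta,u)$ and $d\bm{w}(\by)=r^{d-1}\,dr\,dw(\theta)$, the weight $\Phi'(\abs{\by})/\abs{\by}^d$ absorbs $\abs{\by}\cdot r^{d-1}=r^{d}$ and leaves
\begin{align*}
  \mathcal{A}(u)(\bx)=\int_{S^{d-1}}\!\int_0^\infty\Phi'(r)\Bigl\{
  &\frak{f}^+(\bm{n}_\theta,u(\bx))-\frak{f}^+(\bm{n}_\theta,u(\bx+r\bm{n}_\theta))\\
  &+\frak{f}^-(\bm{n}_\theta,u(\bx))-\frak{f}^-(\bm{n}_\theta,u(\bx-r\bm{n}_\theta))\Bigr\}\,dr\,dw(\theta).
\end{align*}
Two elementary observations do the work. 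First, since $\abs{\bm{F}'}\le L$ a.e.\ and $\abs{\bm{n}_\theta}=1$, the scalar map $u\mapsto\bm{F}(u)\cdot\bm{n}_\theta$ is $L$-Lipschitz, hence so are its increasing and decreasing parts in the sense of \eqref{eq:EO-split}; consequently each of $\frak{f}^+(\bm{n}_\theta,\cdot)$ and $\frak{f}^-(\bm{n}_\theta,\cdot)$ is $L$-Lipschitz on $\R$, uniformly in $\theta$, and vanishes at $u=0$ (because $\bm{F}(0)=\bm{0}$). Second, since $\Phi$ is non-negative and non-increasing, $\Phi'$ is a non-positive measure with total mass $\int_0^\infty\abs{\Phi'(r)}\,dr=\Phi(0^+)-\lim_{r\to\infty}\Phi(r)\le\Phi(0)<\infty$ by \eqref{eq:Phi-zero} (a jump of $\Phi$ at some $r_0$ contributing a Dirac mass there with the appropriate sign). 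Bounding each of the four flux increments by $L\norm{u-v}_{L^\infty(\R^d)}$ and integrating gives, for $u,v\in X$ and a.e.\ $\bx$,
\begin{align*}
  \abs{\mathcal{A}(u)(\bx)-\mathcal{A}(v)(\bx)}
  &\le\int_{S^{d-1}}\!\int_0^\infty\abs{\Phi'(r)}\,4L\,\norm{u-v}_{L^\infty(\R^d)}\,dr\,dw(\theta)\\
  &\le 4L\,w(S^{d-1})\,\Phi(0)\,\norm{u-v}_{L^\infty(\R^d)}=:K\,\norm{u-v}_{L^\infty(\R^d)},
\end{align*}
with measurability of $\mathcal{A}(u)$ furnished by Fubini's theorem. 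Since $\mathcal{A}(0)=0$, this also yields $\norm{\mathcal{A}(u)}_{L^\infty}\le K\norm{u}_{L^\infty}$, so $\mathcal{A}$ maps $X$ into $X$ and is globally $K$-Lipschitz. Cauchy--Lipschitz then gives the unique $u\in C^1\bigl([0,\infty);L^\infty(\R^d)\bigr)$, which is the claimed solution of \eqref{eq:rotfilter2}.

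For the space-Lipschitz bound when $\lipnorm{u_0}<\infty$, I would use that $\mathcal{A}$ commutes with spatial translations: from the radial formula, $\mathcal{A}\bigl(u(\cdot+\bm{h})\bigr)(\bx)=\mathcal{A}(u)(\bx+\bm{h})$ for every $\bm{h}\in\R^d$, because the kernel depends on $\bx$ only through the shifts $\pm r\bm{n}_\theta$. Hence $v(\bx,t):=u(\bx+\bm{h},t)-u(\bx,t)$ satisfies $\partial_t v(\cdot,t)=\mathcal{A}\bigl(u(\cdot+\bm{h},t)\bigr)-\mathcal{A}\bigl(u(\cdot,t)\bigr)$, so the Lipschitz estimate gives $\norm{\partial_t v(\cdot,t)}_{L^\infty}\le K\norm{v(\cdot,t)}_{L^\infty}$. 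Since $t\mapsto v(\cdot,t)$ is $C^1$ into $L^\infty$, this implies $\norm{v(\cdot,t)}_{L^\infty}\le\norm{v(\cdot,0)}_{L^\infty}+K\int_0^t\norm{v(\cdot,s)}_{L^\infty}\,ds$; Gr\"onwall's inequality together with $\norm{v(\cdot,0)}_{L^\infty}=\norm{u_0(\cdot+\bm{h})-u_0}_{L^\infty}\le\lipnorm{u_0}\abs{\bm{h}}$ then gives $\norm{u(\cdot+\bm{h},t)-u(\cdot,t)}_{L^\infty}\le\lipnorm{u_0}\abs{\bm{h}}\,e^{Kt}$. Dividing by $\abs{\bm{h}}$ and taking the supremum over $\bm{h}\neq\bm{0}$ yields the stated bound $\lipnorm{u(\cdot,t)}\le\lipnorm{u_0}\exp\bigl(4Lw(S^{d-1})\Phi(0)t\bigr)$.

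I do not expect a genuine obstacle here: everything is powered by the global Lipschitz property of $\mathcal{A}$ on $L^\infty$, which rests only on $\abs{\bm{F}'}\le L$ and $\int_0^\infty\abs{\Phi'}\le\Phi(0)$. The two points that deserve a little care are (i) the passage to radial coordinates and the joint measurability needed for Fubini, and (ii) verifying that the abstract $L^\infty$-valued solution curve genuinely solves \eqref{eq:rotfilter2} for each fixed $\bx$, which is immediate since $\mathcal{A}(u(\cdot,t))$ is an honest function depending continuously on $t$.
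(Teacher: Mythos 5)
Your proposal is correct and follows essentially the same route as the paper: the authors also view \eqref{eq:rotfilter2} as an ODE in $L^\infty(\R^d)$, establish the global Lipschitz bound $\norm{\audp\bm{F}(u)-\audp\bm{F}(v)}_{L^\infty}\le 4Lw(S^{d-1})\Phi(0)\norm{u-v}_{L^\infty}$ via the pointwise estimate $\abs{\frak{f}^\pm(\by,u)-\frak{f}^\pm(\by,v)}\le L\abs{\by}\abs{u-v}$ and the identity $\int_{\R^d}\frac{-\Phi'(\abs{\by})}{\abs{\by}^{d-1}}\,d\bm{w}(\by)=w(S^{d-1})\Phi(0)$, and invoke an abstract Cauchy--Lipschitz theorem. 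The spatial Lipschitz bound is likewise obtained in the paper by taking $v(\bx,t)=u(\bx+\bm{h},t)$ and applying Gr\"onwall, exactly as you do.
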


\begin{proof}
  We view the map $u\mapsto \audp \bm{F}(u)$ as a map from
  $L^{\infty{}}(\R^d)$ to itself.  Thus \eqref{eq:rotfilter2} has the
  form of an ordinary differential equation in $L^{\infty}(\R^d)$, and
  using \cite[Theorem~16.2]{Pata:2019aa}, existence of a unique
  solution follows if the right hand side is Lipschitz continuous in
  $L^{\infty}(\R^d)$. We use the notation
  \begin{equation*}
    \Delta(\bx)=u(\bx)-v(\bx)\ \ \text{and}\ \
    \Delta^\Phi(\bx)=\audp 
    \bm{F}(u)(\bx)-\audp\bm{F}(v)(\bx).
  \end{equation*}
  Then we have
  \begin{align*}
    \abs{\Delta^\Phi(\bx)}
    &=\Bigl|\int_{\R^d} 
      \frac{\Phi'(\abs{\by})}{\abs{\by}^d}
      \Bigl\{\left(\frak{f}^+(\by,u(\bx))-
      \frak{f}^+(\by,v(\bx))\right)
    \\
    &\qquad\qquad
      -\left(\frak{f}^+(\by,u(\bx+\by))
      -\frak{f}^+(\by,v(\bx+\by))\right)\\
    &\qquad\qquad
      +\left(\frak{f}^-(\by,u(\bx))
      -\frak{f}^-(\by,v(\bx))\right)\\
    &\qquad\qquad -\left(\frak{f}^-(\by,u(\bx-\by))
      -\frak{f}^-(\by,v(\bx-\by,t))\right)\Bigr\}
      \,d\bm{w}(\by)\Bigr|\\
    &\le -\int_{\R^d} \frac{\Phi'(\abs{\by})}{\abs{\by}^d}
      \Bigl\{\abs{\frak{f}^+(\by,u(\bx))-
      \frak{f}^+(\by,v(\bx))}\notag
    \\
    &\qquad\qquad
      + \abs{\frak{f}^+(\by,u(\bx+\by))
      -\frak{f}^+(\by,v(\bx+\by))}\notag\\
    &\qquad\qquad
      +\abs{\frak{f}^-(\by,u(\bx))
      -\frak{f}^-(\by,v(\bx))}\\
    &\qquad\qquad +\abs{\frak{f}^-(\by,u(\bx-\by))
      -\frak{f}^-(\by,v(\bx-\by))}\Bigr\}
      \,d\bm{w}(\by),\notag
  \end{align*}
  since
  \begin{equation*}
    \abs{\frak{f}^\pm(\by,u)-\frak{f}^\pm(\by,v)}
    \le L\abs{\by}\abs{u-v},
  \end{equation*}
  we get
  \begin{align*}
    \norm{\Delta^\Phi}_{L^\infty(\R^d)}
    &\le 4L \norm{\Delta}_{L^\infty(\R^d)}
      \int_{\R^d}
      \frac{-\Phi'(\abs{\by})}{\abs{\by}^{d-1}}
      \,d\bm{w}(\by)\\ &
      =4Lw(S^{d-1})\Phi(0)
      \norm{\Delta}_{L^\infty(\R^d)},
  \end{align*}
  confirming the Lipschitz continuity of $\audp\bm{F}$ in $L^\infty$.

  To show that the solution is also Lipschitz continuous in $\bx$, we
  choose $\bm{h}\ne\bm{0}$, and use $v(\bx,t)=u(\bx+\bm{h},t)$ and
  that $u$ is a solution to \eqref{eq:rotfilter2} to deduce
  \begin{align*}
    \pt
    \frac{\abs{u(\bx+\bm{h},t)-u(\bx,t)}}{\abs{\bm{h}}}
    &\le \frac{1}{\abs{\bm{h}}} \abs{\Delta^\Phi(\bx,t)}\\
    &\le 4Lw(S^{d-1})\Phi(0)\lipnorm{u(\cdot,t)}.
  \end{align*}
  Hence, by integration in $t$
  \begin{equation*}
    \frac{\abs{u(\bx+\bm{h},t)-u(\bx,t)}}{\abs{\bm{h}}}
    \le
    \lipnorm{u_0}+4Lw(S^{d-1})\Phi(0)
    \int_0^t \lipnorm{u(\cdot,s)}\,ds,
  \end{equation*}
  and thus
  \begin{equation*}
    \lipnorm{u(\cdot,t)}\le
    \lipnorm{u_0}+4Lw(S^{d-1})\Phi(0)
    \int_0^t \lipnorm{u(\cdot,s)}\,ds.
  \end{equation*}
  An application of Gronwall's inequality finishes the proof.
\end{proof}

\begin{remark}\normalfont
  It may seem restrictive to assume global Lipschitz continuity of
  $\bm{F}$. However, we shall show that solutions of
  \eqref{eq:rotfilter2} are bounded in $L^\infty$ by their initial
  data. So if we consider initial data in $L^\infty(\R^d)$, it is
  sufficient to assume that $\bm{F}$ is locally Lipschitz continuous.
\end{remark}

\begin{remark}\label{rem:L1-L1loc}
  \normalfont Observe that if $u_0\in L^1(\R^d)$, then the above proof
  shows that
  \begin{equation*}
    \pt \norm{\Delta(\cdot,t)}_{L^1(\R^d)}
    \le 4Lw(S^{d-1})
    \Phi(0)\norm{\Delta(\cdot,t)}_{L^1(\R^d)}.
  \end{equation*}
  By Gronwall's inequality, this implies that if $u_0\in L^1(\R^d)$,
  then $u(\cdot,t)\in L^1(\R^d)$ for $t>0$.  An analogous claim holds
  if $u_0\in\lenloc(\R^d)$.
\end{remark}

We next present a collection of properties, or a priori estimates, of
solutions to the non-local model \eqref{eq:rotfilter2}.  Moreover, we
establish a continuous dependence result \eqref{eq:Phistab} for
perturbations of the initial data $u_0$ and the filter $\Phi$, which
is the most novel (technical) contribution of this work due to its
robustness in the filter size, as explained in the introduction.
Since the primary focus of this study is on the non-locality of the
model \eqref{eq:rotfilter2}, we have not included stability with
respect to the flux function, which is well-understood in the context
of local conservation laws \cite{Bouchut:1998ys,Lucier:1986px}, and
can also be addressed for the non-local model in this paper.

\begin{theorem}\label{thm:properties}
  Suppose the filter $\Phi$ is a non-negative and non-increasing
  function on $\R^+$ that satisfies \eqref{eq:Phi-r} and
  \eqref{eq:Phi-zerolim}.  Let $u$ be the unique solution to the
  non-local Cauchy problem \eqref{eq:rotfilter2}, as guaranteed by
  Proposition \ref{prop:L1exist}.  This solution $u(t)$ belongs to
  $L^1(\R^d)$ or $\lenloc(\R^d)$, depending on the initial data $u_0$
  (see Remark \ref{rem:L1-L1loc}).
  \begin{itemize}
  \item[\bf{a}] The solution $u$ satisfies
    \begin{equation}
      \essinf_{\bx\in\R^d}u_0(\bx)\le 
      u(\bx,t)\le \essup_{\bx\in\R^d}u_0(\bx)\ \ \ 
      \text{for almost all $\bx$,}
      \label{eq:supbnd}
    \end{equation}
    where $\essinf$ and $\essup$ may be $\mp\infty$.  If $u_0$ is in
    $BV(\R^d)$, then, for $t>0$, $u(\cdot,t)\in BV(\R^d)$ with
    \begin{align}\label{eq:BVbnd}
      \abs{u(\cdot,t)}_{BV(\R^d)}
      &\le \abs{u_{0}}_{BV(\R^d)},\\
      \intertext{and}
      \label{eq:TLipbnd}
      \norm{\pt u(\cdot,t)}_{L^1(\R^d)} 
      & \le 2w(S^{d-1})L \abs{u_{0}}_{BV(\R^d)}.
    \end{align}

  \item[\bf{b}] If $v$ is another solution of \eqref{eq:rotfilter2}
    with initial data $v_0$ and $(u_0-v_0)\in L^1(\R^d)$, then
    \begin{equation}\label{eq:L1bnd}
      \norm{u(\cdot,t)-v(\cdot,t)}_{L^1(\R^d)}
      \le \norm{u_0-v_0}_{L^1(\R^d)},\quad t\ge 0.
    \end{equation}

  \item[\bf{c}] If $v$ is the unique solution of
    \begin{equation*}
      v_t=\aud^\Psi\!\bm{F}(v),\ \ \ v(\cdot,0)=v_0,
    \end{equation*}
    where $(u_0-v_0)\in L^1(\R^d)$, $u_0$ \emph{or} $v_0$ is in
    $BV(\R^d)$, and $\Psi$ is a non-negative and non-increasing filter
    function on $\R^+$ satisfying \eqref{eq:Phi-r} and
    \eqref{eq:Phi-zerolim}, then
    \begin{equation}\label{eq:Phistab}
      \begin{aligned}
        \norm{u(\cdot,T)-v(\cdot,T)}_{L^1(\R^d)}&\le
        \norm{u_0-v_0}_{L^1(\R^d)}\\
        &\qquad + C\Bigl(T\int_0^\infty
        r\abs{\Phi(r)-\Psi(r)}\,dr\Bigr)^{\frac12},
      \end{aligned}
    \end{equation}
    where the constant $C$ only depends on $d$,
    $\min\seq{\abs{u_0}_{BV},\abs{v_0}_{BV}}$ and $\bm{F}$.
  \end{itemize}
\end{theorem}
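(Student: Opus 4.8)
\textit{Plan.}\ I would prove the three parts in the order \textbf{b}, \textbf{a}, \textbf{c}, each building on the previous. For part \textbf{b}: since $u,v\in C^1([0,\infty);L^1(\R^d))$ solve \eqref{eq:rotfilter2} with the \emph{same} $\Phi$, I differentiate $t\mapsto\|u(\cdot,t)-v(\cdot,t)\|_{L^1(\R^d)}$ (after the usual mollification of $\sgn$), pair the equation for $u-v$ with $\sgn(u(\bx,t)-v(\bx,t))$, and use Fubini; this bounds the derivative by the double integral over $(\bx,\by)$ of $\frac{\Phi'(\abs{\by})}{\abs{\by}^d}\sgn(u(\bx)-v(\bx))$ against the four signed differences $+\bigl[\frak{f}^\pm(\by,u(\bx))-\frak{f}^\pm(\by,v(\bx))\bigr]$ and $-\bigl[\frak{f}^\pm(\by,u(\bx\pm\by))-\frak{f}^\pm(\by,v(\bx\pm\by))\bigr]$ (with $\frak{f}^+$ shifted by $+\by$ and $\frak{f}^-$ by $-\by$). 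Since each $\frak{f}^\pm(\by,\cdot)$ is non-decreasing and $\Phi'\le0$, the two ``diagonal'' terms equal $\frac{\Phi'(\abs{\by})}{\abs{\by}^d}\absb{\frak{f}^\pm(\by,u(\bx))-\frak{f}^\pm(\by,v(\bx))}\le0$, while the ``off-diagonal'' terms are dominated in modulus by $\frac{-\Phi'(\abs{\by})}{\abs{\by}^d}\absb{\frak{f}^\pm(\by,u(\bx\pm\by))-\frak{f}^\pm(\by,v(\bx\pm\by))}$; the substitutions $\bx\mapsto\bx\mp\by$ (for fixed $\by$) identify the latter with the negatives of the diagonal terms, so the whole right-hand side is $\le0$ and \eqref{eq:L1bnd} follows.

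\textit{Part \textbf{a}.}\ For \eqref{eq:supbnd} I use a comparison principle for the ODE \eqref{eq:rotfilter2}: the map $u\mapsto\audp\bm F(u)$ is quasi-monotone — if $u\le v$ and $u(\bx_0)=v(\bx_0)$ then $\audp\bm F(u)(\bx_0)\le\audp\bm F(v)(\bx_0)$, because the bracket in \eqref{eq:non-local-div} is non-decreasing in the neighbour values and $\Phi'/\abs{\by}^d\le0$ — so $u_0\le v_0$ implies $u\le v$; comparing $u$ with the constant stationary solutions $\essup u_0$ and $\essinf u_0$ (vacuous when $\pm\infty$) yields \eqref{eq:supbnd}. (Equivalently, multiply \eqref{eq:rotfilter2} by $\eta'(u)$, symmetrise as in part \textbf{b} to get $\frac{d}{dt}\int_{\R^d}\eta(u(\cdot,t))\le0$ for convex $\eta$, and take $\eta=(\cdot-c)^{\pm}$.) The bound \eqref{eq:BVbnd} is immediate from part \textbf{b} and translation invariance of $\audp$: $u(\cdot+\bm h,t)$ solves \eqref{eq:rotfilter2} with data $u_0(\cdot+\bm h)$, so $\|u(\cdot+\bm h,t)-u(\cdot,t)\|_{L^1}\le\|u_0(\cdot+\bm h)-u_0\|_{L^1}$, and the $L^1$-translation characterisation of $BV$ gives $\abs{u(\cdot,t)}_{BV}\le\abs{u_0}_{BV}$. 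For \eqref{eq:TLipbnd}, cancelling $\abs{\by}^d$ against $\abs{\by}\,r^{d-1}dr$ and integrating by parts in $r$ along each ray (Lemma \ref{lem:Phibp}) rewrites
\begin{equation*}
\audp\bm F(u)(\bx)=\int_{S^{d-1}}\!\int_0^\infty\Phi(r)\Bigl\{\bm{n}_\theta\cdot\nabla\!\bigl[\frak{f}^+(\bm{n}_\theta,u)\bigr](\bx+r\bm{n}_\theta)-\bm{n}_\theta\cdot\nabla\!\bigl[\frak{f}^-(\bm{n}_\theta,u)\bigr](\bx-r\bm{n}_\theta)\Bigr\}\,dr\,dw(\theta),
\end{equation*}
so $\|\pt u(\cdot,t)\|_{L^1}\le\int_{S^{d-1}}\int_0^\infty\Phi(r)\bigl(\absb{\frak{f}^+(\bm{n}_\theta,u(\cdot,t))}_{BV}+\absb{\frak{f}^-(\bm{n}_\theta,u(\cdot,t))}_{BV}\bigr)dr\,dw(\theta)\le2Lw(S^{d-1})\abs{u(\cdot,t)}_{BV}$, which with \eqref{eq:BVbnd} and \eqref{eq:Phi-r} is \eqref{eq:TLipbnd}. (The integration by parts needs $u(\cdot,t)$ Lipschitz; the cases $u_0\in BV$ only, or $\Phi(0)=\infty$, follow by approximating $u_0$ and $\Phi$ and passing to the limit with parts \textbf{b},\textbf{c}.)

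\textit{Part \textbf{c}.}\ This is the crux. Writing $\pt(u-v)=\bigl[\audp\bm F(u)-\audp\bm F(v)\bigr]+E$ with $E:=\audp\bm F(v)-\aud^\Psi\!\bm F(v)$, the first bracket contributes $\le0$ once paired with $\sgn(u-v)$, exactly as in part \textbf{b}, so everything reduces to controlling the cumulative effect of the filter-mismatch term $E$. The naive estimate $\frac{d}{dt}\|u-v\|_{L^1}\le\|E(\cdot,t)\|_{L^1}\le C\abs{v_0}_{BV}\int_0^\infty\abs{\Phi-\Psi}\,dr$ (one integration by parts in $r$, using \eqref{eq:BVbnd} for $v$) is \emph{not} robust, since $\int\abs{\Phi-\Psi}$ need not be small when $\Phi,\Psi$ concentrate near $0$ although $\int r\abs{\Phi-\Psi}$ is — which is precisely the zero-filter regime. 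Instead I would run a Kruzhkov/Kuznetsov doubling-of-variables argument, assuming w.l.o.g.\ $v_0\in BV$ (the estimate and $C$ are symmetric in $(u,\Phi)\leftrightarrow(v,\Psi)$, with $C$ depending only on $\min\{\abs{u_0}_{BV},\abs{v_0}_{BV}\}$): fix a spatial mollifier $\omega_\eps$ on $\R^d$ (and a temporal one for technical convenience), put $\Lambda_\eps(t):=\iint\absb{u(\bx,t)-v(\by,t)}\,\omega_\eps(\bx-\by)\,d\bx\,d\by$, and use the Kruzhkov-type entropy balances from part \textbf{b} for $u$ in $\bx$ with constant $v(\by,t)$ and for $v$ in $\by$ with constant $u(\bx,t)$. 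The terms comparing $\audp$ (on $u$) with $\aud^\Psi$ (on $v$) split into (i) a ``same filter, nearby points'' part, whose mismatch is absorbed by the $O(\eps)$ modulus of $\omega_\eps$ and $\abs{v(\cdot,t)}_{BV}\le\abs{v_0}_{BV}$, and (ii) the genuine filter-difference part. For (ii) the decisive point is that, by the unit-mass normalisation \eqref{eq:Phi-r} for \emph{both} filters, $R(r):=-\int_r^\infty(\Phi-\Psi)(\rho)\,d\rho$ satisfies $R(0)=R(\infty)=0$ and, by Fubini, $\|R\|_{L^1(0,\infty)}\le\int_0^\infty r\absb{\Phi(r)-\Psi(r)}\,dr$; two successive integrations by parts in $r$ — all boundary terms vanishing by $R(0)=R(\infty)=0$ and \eqref{eq:Phi-zerolim} — reduce (ii) to a quantity bounded by $C\abs{v_0}_{BV}\,\eps^{-1}\|R\|_{L^1}$, the $\eps^{-1}$ being the price of moving the second $r$-derivative onto the $\eps$-regularised argument. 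Collecting the estimates gives, for every $\eps>0$,
\begin{equation*}
\|u(\cdot,T)-v(\cdot,T)\|_{L^1}\le\|u_0-v_0\|_{L^1}+C\Bigl(\eps+\eps^{-1}T\!\int_0^\infty r\absb{\Phi(r)-\Psi(r)}\,dr\Bigr),\qquad C=C\bigl(d,\min\{\abs{u_0}_{BV},\abs{v_0}_{BV}\},\bm F\bigr),
\end{equation*}
and optimising, $\eps\sim\bigl(T\int_0^\infty r\abs{\Phi-\Psi}\,dr\bigr)^{1/2}$, yields \eqref{eq:Phistab}.

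\textit{Main obstacle.}\ The delicate point in \textbf{c} is the bookkeeping of the non-local increments after doubling: the shifts $\bx\mapsto\bx\pm r\bm{n}_\theta$ do not commute with the kernel $\omega_\eps(\bx-\by)$, so the Kruzhkov entropy-flux inequalities must be applied ray-by-ray and the resulting triple integrals re-parametrised by $\bx\mapsto\bx\mp r\bm{n}_\theta$ (as in part \textbf{b}) before the $r$-integrations by parts become legitimate; and it is exactly the requirement that the final error involve $\int_0^\infty r\abs{\Phi-\Psi}\,dr$ — rather than the coarser $\int\abs{\Phi-\Psi}\,dr$ or $\int r\abs{(\Phi-\Psi)'}\,dr$ — that forces the \emph{double} integration by parts and the essential use of the unit-mass normalisation \eqref{eq:Phi-r}.
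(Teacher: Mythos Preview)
Your plan is correct and matches the paper's proof closely: part \textbf{b} via the sign/monotonicity cancellation (the paper inserts a weight $e^{-\delta|\bx|}$ and sends $\delta\to0$ to justify the $\bx$-integration), \eqref{eq:BVbnd} by translation invariance, \eqref{eq:supbnd} via the entropy inequality (your comparison-principle phrasing is equivalent), \eqref{eq:TLipbnd} by direct estimation of the right-hand side, and part \textbf{c} by Kuznetsov doubling with the unit-mass condition \eqref{eq:Phi-r} as the decisive ingredient, followed by optimisation in~$\eps$.

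One clarification on the mechanism in \textbf{c}: the paper does \emph{not} integrate by parts in $r$. Instead it subtracts the identically-zero quantity $\int_0^\infty(\Psi-\Phi)(r)\,dr\cdot\bigl[\frak q^\pm(\bn_\theta,u,v)\,\bn_\theta\!\cdot\!\nabla\bm\omega_\eps(\bx-\bz)\bigr]$, producing the finite difference $\nabla\bm\omega_\eps(\bx-\bz\mp r\bn_\theta)-\nabla\bm\omega_\eps(\bx-\bz)$, and then shifts $\bx\mapsto\bx\pm r\bn_\theta$ to convert this into $\frak q^\pm(\bn_\theta,u(\bx\pm r\bn_\theta),v)-\frak q^\pm(\bn_\theta,u(\bx),v)$, bounded by $L\,r\,\abs{u_0}_{BV}$; the factor $r$ thus comes directly from the $BV$ estimate. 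Your antiderivative $R$ with $R(0)=R(\infty)=0$ and $\|R\|_{L^1}\le\int_0^\infty r\abs{\Phi-\Psi}\,dr$ is an equivalent packaging of the same idea, but beware that a \emph{literal} second integration by parts in $r$ would put two derivatives on $\bm\omega_\eps$ and yield $\eps^{-2}$ rather than $\eps^{-1}$: after one integration by parts in $r$ (giving $R$), the remaining $r$-derivative must be transferred to $u$ via a change of variables (or integration by parts) in $\bx$, which is exactly the $\bx\mapsto\bx\mp r\bn_\theta$ step you already mention.
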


The proofs of all statements in Theorem \ref{thm:properties}
essentially rely on the following pointwise entropy inequality:

\begin{lemma}\label{lm:entreps-n-rot}
  Consider a (Lipschitz) solution of the non-local problem
  \eqref{eq:rotfilter2}, see Proposition \ref{prop:L1exist}.  Let
  $\eta\in C^2(\R)$ be a convex entropy function, and define the
  entropy fluxes $\frak{q}^\pm$ by
  $\partial_u\frak{q}^\pm(\by,u)
  =\eta'(u)\partial_u\frak{f}^\pm(\by,u)$ and
  $\frak{q}^{\pm}(\by,0)=0$.  We have that\footnote{Beware: The right
    hand side of \eqref{eq:entropy1eps-n-rot} does \emph{not} equal
    $\aud^\Phi\!\bm{Q}(u)$ where $\bm{Q}'=\eta'\bm{F}'$.  Indeed, our
    filter $\audp \bm{F}$ depends non-linearly on $\bm{F}$.}, for
  almost all $(\bx,t)$,
  \begin{equation}
    \label{eq:entropy1eps-n-rot}
    \begin{split}
      \partial_t \eta(u(\bx,t)) \le \int_{\R^d}
      &\frac{\Phi'(\abs{\by})}{\abs{\by}^d}
      \Bigl\{\frak{q}^+(\by,u(\bx,t))-\frak{q}^+(\by,u(\bx+\by,t))\\
      &\qquad\qquad +
      \frak{q}^-(\by,u(\bx,t))-\frak{q}^-(\by,u(\bx-\by,t))\Bigr\}
      \,d\bm{w}(\by).
    \end{split}
  \end{equation}
\end{lemma}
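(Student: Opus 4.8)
The plan is to differentiate $\eta(u)$ in time, substitute the equation, and reduce the assertion to an elementary pointwise convexity inequality for each of the two monotone flux components, exploiting that $\Phi'\le 0$. Since the solution $u$ is continuously differentiable in time (Proposition~\ref{prop:L1exist}) and $\eta\in C^2(\R)$, for every $t>0$ and almost every $\bx$ one has $\partial_t\eta(u(\bx,t))=\eta'(u(\bx,t))\,\partial_t u(\bx,t)$; no doubling of variables is needed because the solution is regular enough. Inserting \eqref{eq:rotfilter2} and carrying the $\by$-independent factor $\eta'(u(\bx,t))$ inside the integral, the claim reduces to the pointwise-in-$\by$ bound
\[
  \eta'(a)\bigl[\frak{f}^+(\by,a)-\frak{f}^+(\by,b_+)+\frak{f}^-(\by,a)-\frak{f}^-(\by,b_-)\bigr]
  \ \ge\
  \frak{q}^+(\by,a)-\frak{q}^+(\by,b_+)+\frak{q}^-(\by,a)-\frak{q}^-(\by,b_-),
\]
where $a=u(\bx,t)$ and $b_\pm=u(\bx\pm\by,t)$. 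Indeed, because $\Phi$ is non-increasing, $\Phi'\le 0$, so the density $\Phi'(\abs{\by})\abs{\by}^{-d}$ is non-positive; multiplying the above bound by it reverses the inequality, and integrating over $\by$ against the non-negative measure $d\bm{w}$ then yields precisely \eqref{eq:entropy1eps-n-rot}.

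I would obtain the pointwise bound by summing two copies of the following one-sided estimate: for $\sigma\in\{+,-\}$ and all real $a,b$,
\[
  \eta'(a)\bigl[\frak{f}^\sigma(\by,a)-\frak{f}^\sigma(\by,b)\bigr]
  -\bigl[\frak{q}^\sigma(\by,a)-\frak{q}^\sigma(\by,b)\bigr]
  =\int_b^a\bigl(\eta'(a)-\eta'(s)\bigr)\,\partial_u\frak{f}^\sigma(\by,s)\,ds
  \ \ge\ 0 .
\]
The identity is just the fundamental theorem of calculus applied to $\frak{q}^\sigma$ through its defining relation $\partial_u\frak{q}^\sigma=\eta'\,\partial_u\frak{f}^\sigma$ (legitimate since $\bm{F}$ is Lipschitz and $\eta'\in C^1$). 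For the inequality I would note from \eqref{eq:EO-split} and \eqref{eq:frakfdef} that \emph{both} components are non-decreasing in $u$: $\frak{f}^+(\by,\cdot)=[\bm{F}\cdot\by]^+$ has derivative $\max\{0,(\bm{F}\cdot\by)'\}\ge 0$, and $\frak{f}^-(\by,\cdot)=-[\bm{F}\cdot\by]^-$ has derivative $-\min\{0,(\bm{F}\cdot\by)'\}\ge 0$. Since $\eta$ is convex, $\eta'(a)-\eta'(s)$ has the sign of $a-s$, so the integrand has the sign of $a-s$ on the whole interval of integration and $\int_b^a$ is therefore non-negative whether $b<a$ or $b>a$. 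Applying this with $(\sigma,b)=(+,b_+)$ and $(\sigma,b)=(-,b_-)$ and adding gives the displayed pointwise bound, hence the lemma.

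I do not expect a genuine obstacle in this argument. The small points requiring care are: (i) the restriction to almost every $(\bx,t)$, which merely reflects that $\partial_t u(\cdot,t)$ is an $L^\infty$-function of $\bx$; (ii) the case in which $\Phi$ has jump discontinuities, where $\Phi'$ is a signed measure, but still non-positive, so the sign step is unaffected; and (iii) the interchange of $\eta'(a)$ with the $\by$-integral and the integration of the pointwise inequality, which are justified by the same integrability of $\Phi'(\abs{\by})\abs{\by}^{-d}$ against differences of the Lipschitz function $u$ that makes $\audp\bm{F}(u)$ well defined (one uses $\abs{a-b_\pm}\le\lipnorm{u(\cdot,t)}\abs{\by}$ together with the homogeneity $\frak{f}^\sigma(\by,\cdot)=\abs{\by}\,\frak{f}^\sigma(\bm{n}_\theta,\cdot)$, so that the brackets vanish quadratically in $\abs{\by}$ near the origin). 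The one conceptual point worth underlining is that, notwithstanding its name, the ``decreasing part'' $\frak{f}^-(\by,\cdot)$ is monotone \emph{non-decreasing}; this is exactly the sign bookkeeping already built into \eqref{eq:frakfdef}, and it is what lets the single-component argument treat the $+$ and $-$ contributions on an equal footing.
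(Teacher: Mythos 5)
Your argument is correct and is essentially the paper's own proof: the paper likewise multiplies the equation by $\eta'(u)$ and writes $\eta'(a)\bigl[\frak{f}^\pm(\by,a)-\frak{f}^\pm(\by,b_\pm)\bigr]=\frak{q}^\pm(\by,a)-\frak{q}^\pm(\by,b_\pm)+D^\pm$ with $D^\pm=\int_{b_\pm}^{a}\int_s^{a}\eta''(\sigma)\,d\sigma\,\partial_u\frak{f}^\pm(\by,s)\,ds\ge 0$, which is your remainder term with $\eta'(a)-\eta'(s)$ expanded via $\eta''$, and then discards it using $\Phi'\le 0$. Your additional remarks on the monotonicity of $\frak{f}^-$ and on the measure-valued $\Phi'$ are consistent with the paper and do not change the argument.
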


\begin{proof}
  A straightforward calculation shows that
  \begin{align*}
    \bigl(\frak{f}^\pm
    (\by,u(\bx,t))-\frak{f}^\pm
    &(\by,u(\bx\pm\by,t))\bigr)\eta'(u(\bx,t))
    \\ &=\frak{q}^\pm(\by,u(\bx,t))
         -\frak{q}^\pm(\by,u(\bx\pm\by,t))
    \\ & \qquad
         +\underbrace{\int_{u(\bx\pm\by,t)}^{u(\bx,t)}
         \int_s^{u(\bx,t)}\eta''(\sigma)\partial_u
         \frak{f}^\pm(\by,s)dsd\sigma}_{D^{\pm}}.
  \end{align*}
  Since the maps $u \mapsto\frak{f}^\pm(\cdot,u)$ are increasing, and
  since $\eta''\ge 0$, $D^\pm\ge 0$. Multiply \eqref{eq:rotfilter2}
  with $\eta'(u)$ to find 
  \begin{align*}
    \partial_t \eta(u(\bx,t))
    &= \int_{\R^d}
      \frac{\Phi'(\abs{\by})}{\abs{\by}^d}
      \Bigl\{\frak{q}^+(\by,u(\bx,t))-\frak{q}^+(\by,u(\bx+\by,t))
    \\ &\qquad\qquad 
         +\frak{q}^-(\by,u(\bx,t))
         -\frak{q}^-(\by,u(\bx-\by,t))\Bigr\}
         \,d\bm{w}(\by)\\ 
    &\qquad + \int_{\R^d} 
      \frac{\Phi'(\abs{\by})}{\abs{\by}^d}
      \left(D^++D^-\right)\,d\bm{w}(\by). 
  \end{align*}
  Since $\Phi'\le 0$ the last integral is non-positive.
\end{proof}

\begin{remark}
In the one-dimensional case 
\eqref{eq:suggestive}, we have a more explicit entropy 
balance equation (generalizing 
\cite[Theorem 5.1]{Coclite:2021aa}):
\begin{equation*}
	\pt \eta(u)+D_\alpha^++D_\alpha^-
	=\px A^+_\alpha{q_+(u)}+A^-_\alpha{q_-(u)},
\end{equation*}
where the operators $A^\pm_\alpha$ are defined 
in \eqref{eq:1D-average} and \eqref{eq:A-}. Additionally, 
the non-local dissipation terms $D_\alpha^\pm$ 
can be expressed as follows:
\begin{align*}
	D_+(x,t) & =
	\int_0^\infty \bigl(-\Phi_\alpha'\bigr)(\zeta)
	\int_{u(x,t)}^{u(x+y,t)}\int_{u(x,t)}^{\sigma}
	\eta''(\mu) f_+'(\sigma)\,d\mu\,d\sigma \,dy \ge 0,
	\\ 
	D_-(x,t)& = -
	\int_{-\infty}^0 \bigl(\Phi_\alpha'\bigr)(\zeta)
	\int_{u(x,t)}^{u(x+y,t)}\int_{u(x,t)}^{\sigma}
	\eta''(\mu) f_-'(\sigma)\,d\mu\,d\sigma \,dy \ge 0.
\end{align*}
These terms can be simplified if $\eta$ and $f$ 
are strictly convex functions, as discussed 
in \cite[Remark 5.2]{Coclite:2021aa} 
for monotone fluxes.
\end{remark}

Now we can prove part {\bf{b}} of Theorem~\ref{thm:properties}:
\begin{proof}[Proof {of \eqref{eq:L1bnd}}]
  We use the same notation as in the proof of Proposition
  \ref{prop:L1exist} to deduce
  \begin{align*}
    \pt\abs{\Delta(\bx,t)}
    &=\sgn{\Delta(\bx,t)}\Delta^\Phi(\bx,t)
    \\ & 
         =\int_{\R^d} \frac{\Phi'(\abs{\by})}{\abs{\by}^d}
         \Bigl\{\sgn{\Delta(\bx,t)}\left(\frak{f}^+(\by,u(\bx,t))
         -\frak{f}^+(\by,v(\bx,t))\right)
    \\ &\qquad\quad
         -\sgn{\Delta(\bx)}\left(\frak{f}^+(\by,u(\bx+\by,t))
         -\frak{f}^+(\by,v(\bx+\by,t))\right)\\
    &\qquad\quad
      +\sgn{\Delta(\bx)}\left(\frak{f}^-(\by,u(\bx,t))
      -\frak{f}^-(\by,v(\bx,t))\right)\\
    &\qquad\quad -\sgn{\Delta(\bx,t)}
      \left(\frak{f}^-(\by,u(\bx-\by,t))
      -\frak{f}^-(\by,v(\bx-\by,t))\right)
      \Bigr\}\,d\bm{w}(\by).
  \end{align*}
  Since $\frak{f}^\pm$ are both non-decreasing in $u$, we have that
  \begin{align*}
    \sgn{\Delta(\bx,t)}\left(\frak{f}^\pm(\by,u(\bx,t))
    -\frak{f}^\pm(\by,v(\bx,t))\right)
    =\abs{\frak{f}^\pm(\by,u(\bx,t))
    -\frak{f}^\pm(\by,v(\bx,t))},
  \end{align*}
  and trivially (since $\Phi'\le 0$)
  \begin{equation*}
    -\Phi'(r)\sgn{a}b\le -\Phi'(r)\abs{b} 
    \ \ \text{for all numbers $a$, $b$
      and non-negative $r$.}
  \end{equation*}
  Using this, we obtain
  \begin{align}
    \pt\abs{\Delta(\bx,t)}
    &\le \int_{\R^d} 
      \frac{\Phi'(\abs{\by})}{\abs{\by}^d}
      \Bigl\{\abs{\frak{f}^+(\by,u(\bx,t))
      -\frak{f}^+(\by,v(\bx,t))}\notag
    \\ &\qquad\qquad
         -\abs{\frak{f}^+(\by,u(\bx+\by,t))
         -\frak{f}^+(\by,v(\bx+\by,t))}\notag\\
    &\qquad\qquad
      +\abs{\frak{f}^-(\by,u(\bx,t))
      -\frak{f}^-(\by,v(\bx,t))}\label{eq:preLipschitz}\\
    &\qquad\qquad -\abs{\frak{f}^-(\by,u(\bx-\by,t))
      -\frak{f}^-(\by,v(\bx-\by,t))}\Bigr\}
      \,d\bm{w}(\by).\notag
  \end{align}

  Let $\delta>0$. Multiply \eqref{eq:preLipschitz} by
  $\psi(\bx)=e^{-\delta\abs{\bx}}$ and integrate in $\bx$ to obtain
  \begin{align*}
    \frac{d}{dt}
    &\iint_{\R^d}\abs{\Delta(\bx,t)}\psi(\bx)\,d\bx\\
    &\le \iint_{\R^{2d}}
      \frac{\Phi'(\abs{\by})}{\abs{\by}^d}
      \Bigl\{\abs{\frak{f}^+(\by,u(\bx,t))
      -\frak{f}^+(\by,v(\bx,t))}\psi(\bx)
    \\ &\qquad\qquad 
         -\abs{\frak{f}^+(\by,u(\bx+\by,t))
         -\frak{f}^+(\by,v(\bx+\by,t))}\psi(\bx)\\
    &\qquad\qquad
      +\abs{\frak{f}^-(\by,u(\bx,t))
      -\frak{f}^-(\by,v(\bx,t))}\psi(\bx)\\
    &\qquad\qquad -\abs{\frak{f}^-(\by,u(\bx-\by,t))
      -\frak{f}^-(\by,v(\bx-\by,t))}\psi(\bx)\Bigr\}
      \,d\bm{w}(\by)d\bx\\
    &= \iint_{\R^{2d}}\frac{\Phi'(\abs{\by})}{\abs{\by}^d}
      \Bigl\{\abs{\frak{f}^+(\by,u(\bx,t))-
      \frak{f}^+(\by,v(\bx,t))}
      \left(\psi(\bx)-\psi(\bx-\by)\right)
    \\ &\qquad\qquad
         +\abs{\frak{f}^-(\by,u(\bx,t))
         -\frak{f}^-(\by,v(\bx,t))}
         \left(\psi(\bx)-\psi(\bx+\by)\right)\Bigr\}
         \,d\bm{w}(\by)d\bx.
  \end{align*}
  By the convexity of the exponential function,
  \begin{align*}
    \abs{\psi(\bx\pm\by)-\psi(\bx)}
    &=\abs{e^{-\delta\abs{\bx\pm\by}}
      -e^{-\delta\abs{\bx}}}\\
    &\le \delta e^{-\delta\abs{\bx}}
      \absb{\abs{\bx\pm\by}-\abs{\by}}\\
    &\le \delta\psi(\bx)\abs{\by}.
  \end{align*}
  Recall the definition of 
  $\frak{f}^\pm$~\eqref{eq:frakfdef}, which implies
  \begin{align*}
    \abs{\frak{f}^\pm(\by,u)-\frak{f}^\pm(\by,v)}
    &=\abs{[\bm{F}\cdot\by]^\pm(u)
      -[\bm{F}\cdot\by]^\pm(v)}\\
    & \le \abs{\by} L \abs{u-v},
  \end{align*}
  where $L$ is the Lipschitz constant of $\bm{F}$. Using this
  \begin{align*}
    & \frac{d}{dt}
      \int_{\R^d}\abs{\Delta(\bx,t)}
      \psi(\bx)\,d\bx
    \\ & \quad 
         \le \delta\iint_{\R^{2d}}
         \frac{\abs{\Phi'(\abs{\by})}}{\abs{\by}^{d-1}}
         \Bigl\{\abs{\frak{f}^+(\by,u(\bx,t))
         -\frak{f}^+(\by,v(\bx,t))}
    \\ & \quad\qquad\qquad\qquad
         +\abs{\frak{f}^-(\by,u(\bx,t))
         -\frak{f}^-(\by,v(\bx,t))}
         \Bigr\}\psi(\bx)
         \,d\bm{w}(\by)d\bx 
    \\ & \quad
         \le 2L\delta\iint_{\R^{2d}}
         \frac{\abs{\Phi'(\abs{\by})}}
         {\abs{\by}^{d-2}}
         \abs{\Delta(\bx,t)} \psi(\bx)
         \,d\bm{w}(\by)d\bx
    \\ & \quad
         \le 2w(S^{d-1})\delta L
         \int_{\R^d}\abs{\Delta(\bx,t)}
         \psi(\bx)\,d\bx,
  \end{align*}
  since an easy calculation shows that
  \begin{equation*}
    \int_{\R^d} \frac{\abs{\Phi'(\abs{\by})}}
    {\abs{\by}^{d-2}} \,d\bm{w}(\by) = w(S^{d-1}).
  \end{equation*}
  Therefore, we can invoke Gronwall’s inequality to obtain
  \begin{equation*}
    \int_{\R^d}
    e^{-\delta|\bx|} \abs{u(\bx,t)-v(\bx,t)}\,d\bx
    \le e^{2w(S^{d-1})\delta Lt}
    \int_{\R^d}e^{-\delta|\bx|}
    \abs{u_0(\bx)-v_0(\bx)}\,d\bx. 
  \end{equation*}
  This estimate holds for $\delta>0$ and any two solutions $u$ and
  $v$. If $u_0-v_0\in L^1(\R^d)$, we can send $\delta\to0$ to conclude
  the proof.
\end{proof}

Now, we can establish \eqref{eq:BVbnd} and \eqref{eq:TLipbnd}.

\begin{proof}[Proof of \eqref{eq:BVbnd} and \eqref{eq:TLipbnd}]
  Setting $v(\bx,t)=u(\bx+\bm{h},t)$ in \eqref{eq:L1bnd} shows
  \eqref{eq:BVbnd}.

  To show \eqref{eq:TLipbnd} we integrate \eqref{eq:rotfilter2} with
  respect to $\bx$ and then apply the triangle inequality:
  \begin{align*}
    \int_{\R^d}
    &\abs{\pt u(\bx,t)}\,d\bx
    \\ &\le  \iint_{\R^{2d}}
         \frac{\abs{\Phi'(\abs{\by})}}{\abs{\by}^d}
         \Bigl\{\abs{\frak{f}^+(\by,u(\bx,t))
         -\frak{f}^+(\by,u(\bx+\by,t))}
    \\ & \hphantom{\int_{\R^d}
         \frac{\Phi'(\abs{\by})}{\abs{\by}^d}
         \Bigl\{} \qquad
         + \abs{\frak{f}^-(\by,u(\bx,t))
         -\frak{f}^-(\by,u(\bx-\by,t))}\Bigr\}
         \,d\bm{w}(\by) d\bx
    \\ &\le 
         L \iint_{\R^{2d}}
         \frac{\abs{\Phi'(\abs{\by})}}{\abs{\by}^{d-1}}
         \Bigl\{\abs{u(\bx,t)-u(\bx+\by,t)}
    \\ & \hphantom{\int_{\R^d}
         \frac{\Phi'(\abs{\by})}{\abs{\by}^d}
         \Bigl\{}
         \qquad
         +\abs{u(\bx,t) -u(\bx-\by,t)}\Bigr\}
         \,d\bx d\bm{w}(\by)
    \\ &
         \le 2L\abs{u_0}_{BV}\int_{\R^d}
         \frac{ \abs{\Phi'(\abs{\by})}}
         {\abs{\by}^{d-2}}\,d\bm{w}(\by)
    \\ &=2w(S^{d-1})L\abs{u_0}_{BV},
  \end{align*}
  which concludes the proof.
\end{proof}

The following entropy estimate is useful in proving \eqref{eq:supbnd}

\begin{lemma}\label{lm:entreps-1-n-rot}
  Let $\eta\in C^2(\R)$ be a convex non-negative entropy function. If
  $u_0$ and $\eta(u_0)$ both are in $L^1(\R^d)$, then
  \begin{equation}\label{eq:entropy2eps-n-rot}
    \int_{\R^d}\eta(u(\bx,t))d\bx
    \le \int_{\R^d}\eta(u_0(\bx))d\bx,
    \ \ \text{for $t\ge 0$}.
  \end{equation}
\end{lemma}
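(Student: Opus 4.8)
The plan is to integrate the pointwise entropy inequality \eqref{eq:entropy1eps-n-rot} of Lemma~\ref{lm:entreps-n-rot} over $\bx\in\R^d$, show that the resulting right-hand side tends to $0$, and then integrate in $t$. To avoid having to assume $\Phi(0)<\infty$, I would repeat the exponential-cutoff device from the proof of \eqref{eq:L1bnd}: fix $\delta>0$, multiply \eqref{eq:entropy1eps-n-rot} by $\psi(\bx)=e^{-\delta\abs{\bx}}\ge 0$, and integrate over $\bx\in\R^d$. Set $M_T:=\sup_{t\le T}\norm{u(\cdot,t)}_{L^\infty(\R^d)}<\infty$ (Proposition~\ref{prop:L1exist}) and $K_T:=\sup\{\abs{\eta'(s)}:\abs s\le M_T\}$; from $\partial_u\frak{q}^\pm=\eta'\,\partial_u\frak{f}^\pm$, $\frak{q}^\pm(\by,0)=0$ and $\abs{\frak{f}^\pm(\by,u)}\le L\abs{\by}\abs{u}$ one obtains $\abs{\frak{q}^\pm(\by,u(\bx,t))}\le K_T L\abs{\by}\abs{u(\bx,t)}$, while $u(\cdot,t)\in L^1(\R^d)$ with $\norm{u(\cdot,t)}_{L^1}\le\norm{u_0}_{L^1}$ (take $v\equiv 0$ in \eqref{eq:L1bnd}, or use Remark~\ref{rem:L1-L1loc}).

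The next step is to regroup the right-hand side. For each fixed $\by$, splitting the $\bx$-integral (each piece being finite by the bound above) and translating the shifted arguments gives $\int_{\R^d}\frak{q}^+(\by,u(\bx+\by,t))\psi(\bx)\,d\bx=\int_{\R^d}\frak{q}^+(\by,u(\bx,t))\psi(\bx-\by)\,d\bx$ and $\int_{\R^d}\frak{q}^-(\by,u(\bx-\by,t))\psi(\bx)\,d\bx=\int_{\R^d}\frak{q}^-(\by,u(\bx,t))\psi(\bx+\by)\,d\bx$, so that the weighted right-hand side collapses to
\begin{equation*}
  \iint_{\R^{2d}}\frac{\Phi'(\abs{\by})}{\abs{\by}^d}
  \Bigl\{\frak{q}^+(\by,u(\bx,t))\bigl(\psi(\bx)-\psi(\bx-\by)\bigr)
  +\frak{q}^-(\by,u(\bx,t))\bigl(\psi(\bx)-\psi(\bx+\by)\bigr)\Bigr\}\,d\bm{w}(\by)\,d\bx .
\end{equation*}
Bounding $\abs{\psi(\bx\pm\by)-\psi(\bx)}\le\delta\psi(\bx)\abs{\by}$ (convexity of the exponential), and using $\abs{\frak{q}^\pm}\le K_T L\abs{\by}\abs{u}$ together with $\int_{\R^d}\abs{\Phi'(\abs{\by})}\abs{\by}^{-(d-2)}\,d\bm{w}(\by)=w(S^{d-1})$ (the ``easy calculation'' recorded after the proof of \eqref{eq:L1bnd}, which uses only \eqref{eq:Phi-r} and \eqref{eq:Phi-zerolim}), this quantity is at most $2\delta K_T L\,w(S^{d-1})\norm{u_0}_{L^1}$ in absolute value.

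Consequently $\frac{d}{dt}\int_{\R^d}\eta(u(\bx,t))\psi(\bx)\,d\bx\le 2\delta K_T L\,w(S^{d-1})\norm{u_0}_{L^1}$ for a.e.\ $t\in[0,T]$; integrating over $[0,T]$ and letting $\delta\to 0$ — monotone convergence on the left since $\eta\ge 0$ and $\psi\uparrow 1$, dominated convergence on the right since $\eta(u_0)\in L^1(\R^d)$ — produces \eqref{eq:entropy2eps-n-rot}.

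The step I expect to require the most care is not conceptual but of bookkeeping type: justifying that $t\mapsto\int_{\R^d}\eta(u(\bx,t))\psi(\bx)\,d\bx$ is absolutely continuous with derivative $\int_{\R^d}\eta'(u)\,\pt u\,\psi\,d\bx$ (from $u\in C^1([0,\infty);L^\infty(\R^d))$ by Proposition~\ref{prop:L1exist}, local Lipschitzness of $\eta$ on bounded sets, and domination by $\psi\in L^1$), and checking the interchange of the $\bx$- and $\by$-integrations above. The latter is precisely what the extra factor $\abs{\by}$ from $\abs{\psi(\bx\pm\by)-\psi(\bx)}\le\delta\abs{\by}\psi(\bx)$ provides: it turns the (possibly non-integrable near $\by=0$) weight $\abs{\Phi'(\abs{\by})}\abs{\by}^{-(d-1)}$ into the integrable $\abs{\Phi'(\abs{\by})}\abs{\by}^{-(d-2)}$, which is exactly why the cutoff $\psi$ is used here, just as in the proof of \eqref{eq:L1bnd}.
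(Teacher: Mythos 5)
Your proposal is correct and follows essentially the same route as the paper: integrate the pointwise entropy inequality of Lemma~\ref{lm:entreps-n-rot} over $\bx$ and use translation invariance of the Lebesgue integral so that the $\frak{q}^\pm$ terms cancel, the integrability being supplied by $u(\cdot,t)\in L^1(\R^d)$ via \eqref{eq:L1bnd}. The only difference is that you import the exponential-cutoff device from the paper's proof of \eqref{eq:L1bnd} to justify the interchange of integrals (trading exact cancellation for an $O(\delta)$ error), whereas the paper performs the cancellation directly; both are valid.
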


\begin{proof}
  Integrate the pointwise entropy inequality
  \eqref{eq:entropy1eps-n-rot} over $\bx$, then change the order of
  integration and finally change variables $\bx\mapsto \bx\pm\by$ to
  obtain
  \begin{align*}
    \frac{d}{dt}\int_{\R^d}
    &\eta(u(\bx,t))\,d\bx\\
    &\le \int_{\R^d}\int_{\R^d}
      \frac{\Phi'(\abs{\by})}{\abs{\by}^d}
      \Bigl\{\frak{q}^+(\by,u(\bx,t))
      -\frak{q}^+(\by,u(\bx+\by,t))
    \\ &\qquad\qquad 
         +\frak{q}^-(\by,u(\bx,t))
         -\frak{q}^-(\by,u(\bx-\by,t))\Bigr\}
         \,d\bm{w}(\by)d\bx
    \\ &=\int_0^\infty\!\Phi(r)\int_{S^{d-1}}
         \!\Bigl[\int_{\R^d}
         \Bigl\{\frak{q}^+(\bn_\theta,u(\bx,t))
         -\frak{q}^+(\bn_\theta,u(\bx+r\bn_\theta,t))
    \\ &\qquad\qquad 
         + \frak{q}^-(\bn_\theta,u(\bx,t))
         -\frak{q}^-(\bn_\theta,u(\bx-r\bn_\theta,t))\Bigr\}\,d\bx 
         \Bigr]\,dw(\theta)dr=0,
  \end{align*}
  where we have used that $\frak{q}^\pm(\by,u(\cdot,t)) \in L^1(\R^d)$
  for all $t$. This holds since $\frak{q}^\pm$ is Lipschitz, and
  $u(\cdot,t)$ is in $L^1(\R^d)$ by \eqref{eq:L1bnd}.
\end{proof}

\begin{proof}[Proof of \eqref{eq:supbnd}]
  Set $\hat{u}=\essup_{\bx} u_0(\bx)$ and
  $\check{u}=\essinf_{\bx} u_0(\bx)$.  Choose a convex entropy $\eta$
  such that
  \begin{equation*}
    \eta(u)=\begin{cases}
      0 &\text{if $u \in [\check{u},\hat{u}]$},
      \\ >0 &\text{otherwise}.
    \end{cases}
  \end{equation*}
  Since $\int \eta(u_0)\,d\bx=0$, \eqref{eq:entropy2eps-n-rot} implies
  that $u(\bx,t)\in [\check{u},\hat{u}]$ for a.e.~$(\bx,t)$.
\end{proof}

The proof of part {\bf{c}}, which concludes the proof of
Theorem~\ref{thm:properties}, requires 
us to use a weak formulation of \eqref{eq:rotfilter2}.
To this end, we multiply \eqref{eq:rotfilter2} with a test
function $\test\in C^\infty_0(\R^d\times [0,\infty))$ 
and integrate by parts in $t$ to get:
\begin{align*}
    \int_0^\infty \!\!\!\int_{\R^d}& u(\bx,t)\pt\vfi\,d\bx dt
    +\int_{\R^d} {u}_{0}(\bx)\vfi(\bx,0)\,d\bx\\
    &=-\int_0^\infty \!\!\!\int_{\R^d} \audp
    \bm{F}(u)(\bx,t)\test(\bx,t)\,d\bx dt.
\end{align*}
Through transformations of variables, 
such as $\bx\mapsto \bx\pm\by$,
this can be rewritten as
\begin{align*}
  \int_0^\infty
  &\!\!\!\int_{\R^d}
    u(\bx,t)\pt\vfi\,d\bx dt
    +\int_{\R^d} {u}_{0}(\bx)\vfi(\bx,0)\,d\bx
  \\ & 
       =\int_0^\infty\!\!\!\iint_{\R^{2d}}
       \frac{\Phi'(\abs{\by})}{\abs{\by}^d}
       \Bigl\{\frak{f}^+(\by,u(\bx,t))\left(\test(\bx-\by,t)
       -\test(\bx,t)\right)
  \\ &\hphantom{=
       \int_0^\infty\!\!\!\iint_{\R^{2d}}
       \frac{\Phi_\alpha'(\abs{\by})}{\abs{\by}^d}
       \Big\{}
       +\frak{f}^-(\by,u(\bx,t))\left(\test(\bx+\by,t)
       -\test(\bx,t)\right) \Bigr\}
       \,d\bx d\bm{w}(\by)dt.
\end{align*}
We can rewrite the right hand side as follows:
\begin{align*}
  \int_{\R^d}
  &\frac{\Phi'(\abs{\by})}{\abs{\by}^d}
    \frak{f}^+(\by,u(\bx,t))\left(\test(\bx-\by,t)
    -\test(\bx,t)\right)
    \,d\bm{w}(\by)
  \\ &=-\int_{S^{d-1}}\!\!\int_0^\infty
       \Phi'(r) \frak{f}^+\left(\bn_\theta,u(\bx,t)\right)
       \int_0^r \bn_\theta\cdot \nabla 
       \test\left(\bx-s\bn_\theta\right)\,ds
       \,drdw(\theta) 
  \\& =\int_{S^{d-1}}
  \frak{f}^+\left(\bn_\theta,u(\bx,t)\right)
  \bn_\theta\cdot 
  \left(\int_0^\infty \Phi(r) \nabla 
  \test\left(\bx+r\bn_\theta\right)
  \,dr\right) \,dw(\theta),
\end{align*}
where we integrated by parts. 
Since we have a similar formula for the term with $\frak{f}^-$, 
which is
\begin{align*}
  \int_{\R^d}
  &\frac{\Phi'(\abs{\by})}{\abs{\by}^d}
    \frak{f}^-(\by,u(\bx,t))\left(\test(\bx+\by,t)
    -\test(\bx,t)\right)
    \,d\bm{w}(\by)
  \\ & =-\int_{S^{d-1}}\!\!
       \frak{f}^-\left(\bn_\theta,u(\bx,t)\right)
       \bn_\theta\cdot \left(\int_0^\infty 
       \Phi(r) \nabla \test\left(\bx+r\bn_\theta\right)
       \,dr\right) \,dw(\theta),
\end{align*}
we obtain a weak formulation of \eqref{eq:rotfilter2}, 
in which all the derivatives have been 
moved onto the test function 
$\test\in C^\infty_c(\R^d\times [0,\infty))$:
\begin{align}
  \int_0^\infty
  &\!\!\!\int_{\R^d} u(\bx,t)\pt\vfi\,d\bx dt
    +\int_{\R^d} {u}_{0}(\bx)\vfi(\bx,0)\,d\bx\notag
  \\ &=\int_0^\infty \!\!\!\int_{\R^d}\int_{S^{d-1}}
       \Biggl(\frak{f}^+\left(\bn_\theta,u(\bx,t)\right) 
       \bn_\theta\cdot \left(\int_0^\infty \Phi(r)
       \nabla \test\left(\bx-r\bn_\theta\right)\,dr\right)\notag
  \\ &\hphantom{=-\int_0^\infty \!\!\!\int_{\R^d} }
       -\frak{f}^-\left(\bn_\theta,u(\bx,t)\right)
       \bn_\theta\cdot \left(\int_0^\infty \Phi(r)
       \nabla \test\left(\bx+r\bn_\theta\right)\,dr\right)
       \Biggr)\,dw(\theta)\,d\bx dt\notag 
  \\ &=\int_0^\infty \!\!\!\int_{\R^d}\int_{\R^d}
       \frac{\Phi(\abs{\by})}{\abs{\by}^{d+1}}\; \by\cdot\Bigl[
       \frak{f}^+\left(\by,u(\bx,t)\right)
       \nabla\test\left(\bx-\by\right)\label{eq:avetest}
  \\ &\hphantom{=\int_0^\infty 
       \!\!\!\int_{\R^d}\int_{\R^d}
       \frac{\Phi(\abs{\by})}{\abs{\by}^{d+1}}\; \by\cdot\Bigl[ }
       -\frak{f}^-\left(\by,u(\bx,t)\right) \nabla
       \test\left(\bx+\by\right) \Bigr]
       \,d \bm{w}(\by)d\bx dt.\notag
\end{align}

We shall also use a weak formulation of the entropy inequality
\eqref{eq:entropy1eps-n-rot}.  If we multiply
\eqref{eq:entropy1eps-n-rot} with a non-negative test function
$\test\in C^\infty(\R^d\times [0,\infty))$, integrate over $\bx$ and
$t$, integrate by parts in $t$, and proceed as we did to deduce
\eqref{eq:avetest}, we obtain the following $(\bx,t)$-weak form of the
pointwise entropy inequality \eqref{eq:entropy1eps-n-rot}:
\begin{align}
  \int_0^\infty
  &\!\!\!\int_{\R^d} \eta(u(\bx,t))\pt\vfi\,d\bx dt
    +\int_{\R^d} \eta({u}_{0}(\bx))\vfi(\bx,0)\,d\bx\notag
  \\ &\ge\int_0^\infty \!\!\!\int_{\R^d}\int_{\R^d}
       \frac{\Phi(\abs{\by})}{\abs{\by}^{d+1}}\; \by\cdot\Bigl[
       \frak{q}^+\left(\by,u(\bx,t)\right)
       \nabla\test\left(\bx-\by\right)
       \notag
  \\ &\hphantom{=\int_0^\infty \!\!\!\int_{\R^d}\int_{\R^d}
       \frac{\Phi(\abs{\by})}{\abs{\by}^{d+1}}\; \by\cdot\Bigl[ }
       -\frak{q}^-\left(\by,u(\bx,t)\right)
       \nabla\test\left(\bx+\by\right) 
       \Bigr]\,d\bm{w}(\by)d\bx dt\notag\\
  &= \int_0^\infty \!\!\!\int_{\R^d}\int_{S^{d-1}}
       \Biggl(\frak{q}^+\left(\bn_\theta,u(\bx,t)\right) 
       \bn_\theta\cdot \left(\int_0^\infty \Phi(r)
       \nabla \test\left(\bx-r\bn_\theta\right)
       \,dr\right)\label{eq:weakentropy}
  \\ &\hphantom{=-\int_0^\infty \!\!\!\int_{\R^d} }
       -\frak{q}^-\left(\bn_\theta,u(\bx,t)\right)
       \bn_\theta\cdot \left(\int_0^\infty \Phi(r)
       \nabla \test\left(\bx+r\bn_\theta\right)\,dr\right)
       \Biggr)\,dw(\theta)\,d\bx dt.\notag 
\end{align}

\begin{proof}[Proof of \eqref{eq:Phistab}]
  Given the stability estimate \eqref{eq:L1bnd}, we can simplify the
  analysis by considering the case where the initial functions are
  identical, i.e., $u_0=v_0\in BV(\R^d)$. Our proof draws inspiration
  from Kuznetsov’s lemma \cite{Kuznetsov:1976ys}.

  Let $\omega$ be a standard mollifier on $\R$ and define
  \begin{equation*}
    \test(\bx,t,\bz,s)=\omega_{\eps_0}(t-s)
    \bm{\omega}_\eps(\bx-\bz),
    \qquad 
    \bm{\omega}_\eps(\bx)=\prod_{i=1}^d 
    \omega_\eps(x_i).
  \end{equation*}
  For any final time $T>0$ and suitable functions $u=u(\bx,t)$ and
  $\phi=\phi(\bx,t)$, we introduce the functional
  \begin{align*}
    \Lambda(\Phi,\phi,u,k)
    &=\int_{\R^d}\abs{u(\bx,T)-k}\phi(\bx,T)
      \,d\bx-\int_{\R^d}\abs{u(\bx,0)-k}\phi(\bx,0) \,d\bx 
    \\ &\quad - \int_0^T\int_{\R^d}\abs{u-k}
         \partial_t \varphi\,d\bx dt
         +\Gamma(\Phi,\phi,u,k),
  \end{align*}
  where
  \begin{align*}
    \Gamma(\Phi,\phi,u,k)
    &= \int_0^T \!\!\!\int_{\R^d}\int_{\R^d}
      \frac{\Phi(\abs{\by})}{\abs{\by}^{d+1}}\; \by\cdot\Bigl[
      \frak{q}^+\left(\by,u(\bx,t),k\right)
      \nabla\phi\left(\bx-\by\right)
    \\ &\hphantom{=\int_0^\infty \!\!\!
         \int_{\R^d}\int_{\R^d}
         \frac{\Phi(\abs{\by})}{\abs{\by}^{d+1}}\; 
         \by\cdot\Bigl[ }
         -\frak{q}^-\left(\by,u(\bx,t),k\right) 
         \nabla \phi\left(\bx+\by\right) 
         \Bigr]\,d\bm{w}(\by)d\bx dt.
  \end{align*}
  Here, $\frak{q}^\pm(\by,u,k)$ denotes the entropy flux associated
  with the entropy $u\mapsto \abs{u-k}$:
  \begin{equation*}
    \frak{q}^\pm(\by,u,k)=\sgn{u-k}
    \left((\by\cdot \bm{F})^\pm(u)
      -(\by\cdot \bm{F})^\pm(k)\right).
  \end{equation*}

  Let $v=v(\bx,t)$ be the solution of the non-local equation
  \eqref{eq:rotfilter2} with filter $\Psi$ (and initial data $u_0$),
  and note that in this case
  \begin{equation*}
    \Lambda(\Psi,
    \test(\cdot,\bz,\cdot,s),v,k)\le 0,
  \end{equation*}
  for all $(\bz,s)\in \R^d\times (0,\infty)$ and $k\in \R$.  Set
  \begin{equation*}
    \Lambda_{\eps_0,\eps}(\Psi,u,v)
    =\int_0^T\int_{\R^d}\Lambda(\Psi,
    \test(\cdot,\bz,\cdot,s),u,v(\bz,s))\,d\bz ds,
  \end{equation*}
  and
  \begin{equation*}
    \Gamma_{\eps_0,\eps}(\Psi,v,u)=
    \int_0^T\int_{\R^d}\Gamma(\Psi,
    \test(\cdot,\bz,\cdot,s),u,v(\bz,s))\,d\bz ds.
  \end{equation*}
  Choose $u=u(\bz,s)$ to be an entropy solution of
  \eqref{eq:rotfilter2} with filter $\Psi$ (and initial data $u_0$). 
  This results in
  \begin{align}
    0&\ge \Lambda_{\eps_0,\eps}(\Psi,v,u)
       +\Lambda_{\eps_0,\eps}(\Phi,u,v)\notag
    \\ & 
         =\Lambda_{\eps_0,\eps}(\Psi,v,u)
         -\Lambda_{\eps_0,\eps}(\Phi,v,u)\notag
    \\ &\quad
         + \int_0^T\int_{\R^{2d}}\abs{v(\bx,T)-u(\bz,s)}
         \omega_{\eps_0}(T-s)\bm{\omega}_\eps(\bx-\bz)
         \, d\bx d\bz ds \label{eq:t1}
    \\ &\quad
         +\int_0^T\int_{\R^{2d}}\abs{u(\bx,T)-v(\bz,s)}
         \omega_{\eps_0}(T-s)\bm{\omega}_\eps(\bx-\bz)
         \, d\bx d\bz ds \label{eq:t2}
    \\ &\quad 
         -\int_0^T\int_{\R^{2d}}\abs{u_0(\bx)-u(\bz,s)}
         \omega_{\eps_0}(s)\bm{\omega}_\eps(\bx-\bz)
         \, d\bx d\bz ds \label{eq:t3}
    \\ &\quad
         -\int_0^T\int_{\R^{2d}}\abs{u_0(\bx)-v(\bz,s)}
         \omega_{\eps_0}(s)\bm{\omega}_\eps(\bx-\bz)
         \,d\bx d\bz ds. \label{eq:t4}
  \end{align}
  Since $u$ and $v$ are of bounded variation in space and $L^1$
  time-continuous, we obtain the bounds
  \begin{align*}
    \abs{\eqref{eq:t1}
    +\eqref{eq:t2}}
    &\ge 
      \norm{u(\cdot,T)-v(\cdot,T)}_{L^1(\R^d)}
      -C(\eps_0+\eps),
    \\ \abs{\eqref{eq:t3}+\eqref{eq:t4}}
    &\ge -C(\eps_0+\eps),
  \end{align*}
  where the constant $C$ does not 
  depend on $\Phi$ or $\Psi$. We also have
  \begin{align*}
    &\Lambda_{\eps_0,\eps}
    (\Psi,v,u)-\Lambda_{\eps_0,\eps}(\Phi,v,u)
    \\ &= \Gamma_{\eps_0,\eps}(\Psi-\Phi,v,u)
    \\ &=\int_0^T\int_0^T \int_{\R^{2d}}\int_{\R^d}
         \frac{\Psi(\abs{\by})-\Phi(\abs{\by})}{\abs{\by}^{d+1}}\,\by
         \cdot\Bigl[\frak{q}^+\left(\by,u(\bx,t),v(\bz,s)\right)
         \nabla\bm{\omega}_\eps(\bx-\bz-\by) 
    \\&\hphantom{=\int_0^T\int_0^T \int_{\R^{2d}}\int_{\R^d}}
    -\frak{q}^-\left(\by,u(\bx,t),v(\bz,s)\right) 
    \nabla\bm{\omega}_\eps\left(\bx-\bz-\by\right)\Bigr] 
    \\ &\hphantom{=\int_0^T\int_0^T \int_{\R^{2d}}\int_{\R^d}}
        \qquad \times\bm{\omega}_{\eps_0}(t-s)\,d\bm{w}(\by) d\bx d\bz dsdt
    \\ &=\int_0^T\int_0^T \int_{\R^{2d}}\int_{0}^\infty\int_{S^{d-1}}
         \left(\Psi(r)-\Phi(r)\right)\bn_\theta
    \\ & \hphantom{=\int_0^T\int_0^T 
         \int_{\R^{2d}}\int_0^\infty\int}
         \cdot \Bigl[
         \frak{q}^+\left(\bn_\theta,u(\bx,t),v(\bz,s)\right)
         \nabla\bm{\omega}_\eps(\bx-\bz-r\bn_\theta) 
    \\ & \hphantom{=\int_0^T\int_0^T 
         \int_{\R^{2d}}\int_0^\infty\int_{S^(d-1)}}
         -\frak{q}^-\left(\bn_\theta,u(\bx,t),v(\bz,s)\right)
         \nabla\bm{\omega}_\eps\left(\bx-\bz+r\bn_\theta\right)
         \Bigr]\\ &\hphantom{=\int_0^T\int_0^T 
                    \int_{\R^{2d}}\int_0^\infty\int_{S^(d-1)}}
                    \qquad \times\bm{\omega}_{\eps_0}(t-s)
                    \,dw(\theta)dr d\bx d\bz dsdt. 
  \end{align*}
  Next we have that \newcommand{\ints}{\int_0^T\!\!\!\int_0^T\!\!\!
    \int_{\R^{2d}}\!\int_{0}^\infty \!\!\!\int_{S^{d-1}}\!\!\!\!}
  \begin{align*}
    0&=\int_0^\infty \left(\Psi(r)-\Phi(r)\right)\,dr
    \\ &=\ints\left(\Psi(r)-\Phi(r)\right)\bn_\theta
         \cdot \Bigl[
         \frak{q}^+\left(\bn_\theta,u(\bx,t),v(\bz,s)\right)
         \nabla\bm{\omega}_\eps(\bx-\bz)
    \\ &\hphantom{=\int_0^T\int_0^T
         \int_{\R^{2d}}\int_0^\infty\int_{S^(d-1)}}
         -\frak{q}^-\left(\bn_\theta,u(\bx,t),v(\bz,s)\right) \nabla
         \bm{\omega}_\eps\left(\bx-\bz\right)
         \Bigr]
    \\ &
         \hphantom{=\int_0^T\int_0^T \int_{\R^{2d}}
         \int_0^\infty\int_{S^(d-1)}}
         \qquad\times\omega_{\eps_0}(t-s)
         \,dw(\theta)dr d\bx d\bz dsdt. 
  \end{align*}
  Consequently, we deduce that
  \begin{align*}
    \bigl|\Gamma_{\eps_0,\eps}
    &(\Psi-\Phi,v,u)\bigr|
    \\ &= \Bigl|\ints \left(\Psi(r)-\Phi(r)\right)
         \Bigl[\frak{q}^+\left(\bn_\theta,u(\bx,t),v(\bz,s)\right)
    \\ &\hphantom{=\ints}
      \quad  \times \bn_\theta\cdot
      \nabla\left(\bm{\omega}_\eps(\bx-\bz-r\bn_\theta)
         -\bm{\omega}_\eps(\bx-\bz)\right)
    \\ &\hphantom{=\ints}
    -\frak{q}^-\left(\bn_\theta,u(\bx,t),v(\bz,s)\right)
    \\ &\hphantom{=\ints}
         \quad \times \bn_\theta\cdot\nabla
         \left(\bm{\omega}_\eps(\bx-\bz+r\bn_\theta)
         -\bm{\omega}_\eps(\bx-\bz)\right)\Bigr]
    \\ &\hphantom{=\ints}
         \quad\quad \times\omega_{\eps_0}(t-s)
         \,dw(\theta)dr d\bx d\bz dsdt\Bigr|
    \\ & =\Bigl|\ints \left(\Psi(r)-\Phi(r)\right)
         \bn_\theta\cdot\nabla\bm{\omega}_\eps(\bx-\bz)
    \\ & 
         \qquad \qquad 
         \times \Bigl\{
         \bigl[
         \frak{q}^+\left(\bn_\theta,u(\bx+r\bn_\theta,t),v(\bz,s)\right)
         -\frak{q}^+\left(\bn_\theta,u(\bx,t),v(\bz,s)\right)\bigr]
    \\ &\qquad\qquad\quad
         +\bigl[\frak{q}^-\left(\bn_\theta,u(\bx-r\bn_\theta,t),v(\bz,s)\right)
         -\frak{q}^-\left(\bn_\theta,u(\bx,t),v(\bz,s)\right)
         \bigr]\Bigr\}
    \\ 
    &\hphantom{=\ints}
      \ \ \times\omega_{\eps_0}(t-s)
      \,dw(\theta)dr d\bx d\bz dsdt\Bigr|
    \\ &\le \ints \abs{\Phi(r)-\Psi(r)}\,L
         \abs{\nabla\bm{\omega}_\eps(\bx-\bz)}
    \\ & \qquad\qquad\quad
         \times \Bigl\{\bigl[
         \abs{u(\bx+r\bn_\theta,t)-u(\bx,t)}
         +\abs{u(\bx-r\bn_\theta,t)-u(\bx,t)}\bigr]\Bigr\}
    \\ &\hphantom{=\ints}\ \ 
         \times\omega_{\eps_0}(t-s)
         \,dw(\theta)dr d\bx d\bz dsdt.
  \end{align*}
  To estimate this we first integrate $\omega_{\eps_0}$ in $s$ and
  then $\abs{\nabla\bm{\omega}_\eps(\bx-\bz)}$ in $\bz$ using
  \begin{equation*}
    \int_{\R^d}\abs{\nabla 
      \bm{\omega}_{\eps}(\bx-\bz)}\,d\bz 
    = \frac{d^{\frac32}\,2\omega(0)}{\eps}.
  \end{equation*}
  To prove this estimate first note that
  \begin{align*}
    \abs{\nabla 
    \bm{\omega}_{\eps}(\bx)}
    &= \Bigl(\sum_{i=1}^d \left(\partial_{x_i}\bm{\omega}
    (\bx)\right)^2\Bigr)^{\frac12}
    \le \Bigl(d\max_i\left\{
      \abs{\partial_{x_i}\bm{\omega}(\bx)}^2\right\}\Bigr)^{\frac12}\\
    &\le \sqrt{d}\sum_{i=1}^d \abs{\partial_{x_i}\bm{\omega}(\bx)}.
  \end{align*}
  Next compute
  \begin{align*}
    \int_{\R_d}\abs{\nabla 
    \bm{\omega}_{\eps}(\bx-\bz)}\,d\bz
    &\le \sqrt{d} \sum_{i=1}^d\int_{\R^d}
      \abs{\partial_{x_i}\bm{\omega}_\eps(\bx-\bz)}\,d\bz\\
    &= \sqrt{d} \sum_{i=1}^d \prod_{j\ne i}\Bigl(\int_\R
      \omega_\eps(x_j-z)\,dz\Bigr)
      \int_\R \abs{\omega_\eps'\left(x_i-z\right)}\,dz\\
    &= d^{\frac32}\frac{2\omega(0)}{\eps}.
  \end{align*}
  Then we integrate the differences in $u$ with respect to $\bx$ using
  that $u(\cdot,t)\in BV(\R^d)$ with
  $\abs{u(\cdot,t)}_{BV}\le \abs{u_0}_{BV}$,
  \begin{equation*}
    \int_{\R^d}\abs{u(\bx\mp r\bn_\theta,t)
      -u(\bx,t)}\,d\bx\le r\abs{u_0}_{BV(\R^d)}.
  \end{equation*}
  After this we end up with the estimate
  \begin{equation*}
    \bigl|\Gamma_{\eps_0,\eps}
    (\Phi-\Psi,v,u)\bigr|\le
    \frac{4LTd^{\frac32}\abs{u_0}_{BV(\R^d)}\omega(0)}{\eps}
    \int_0^\infty r\abs{\Phi(r)-\Psi(r)}\,dr.
  \end{equation*}
  We can then set $\eps_0=0$ and rearrange \eqref{eq:t1} --
  \eqref{eq:t4} to get
  \begin{equation*}
    \norm{u(\cdot,T)-v(\cdot,T)}_{L^1(\R^d)}\le C\Bigl(
    \eps + \frac{1}{\eps}\int_0^\infty 
    r\abs{\Phi(r)-\Psi(r)}\,dr\Bigr).
  \end{equation*}
  Minimizing over $\eps$ concludes the proof.
\end{proof}

\section{The zero filter limit}\label{sec:filtlim}

We now examine the effect of scaling the filter $\Phi$, under the
assumptions that $\Phi$ satisfies the conditions \eqref{eq:Phi-r},
\eqref{eq:Phi-m} and \eqref{eq:Phi-zero}.  To this end, we introduce
the rescaled filter
\begin{equation}\label{eq:scalePhi}
  \Phi_\alpha(r)=\frac{1}{\alpha}
  \Phi\Bigl(\frac{r}{\alpha}\Bigr), 
  \quad \alpha\in (0,1],
\end{equation}
Denote by $\ua$ the solution of the corresponding upwind filtered
conservation law
\begin{equation*}
  \pt\ua =\auda \bm{F}(\ua),
  \qquad\ua(\cdot,0)=u_0,
\end{equation*}
where $\auda:=\aud^{\Phi_\alpha}$ is defined by
\eqref{eq:non-local-div}. We assume that $u_0\in BV(\R^d)$.

Due to the non-negativity of $\Phi$ and the fact that the integral of
$\Phi$ is one, cf.~\eqref{eq:Phi-r}, $\Phi_\alpha$ tends to the Dirac
measure $\delta_0$ located at zero as $\alpha\to 0$, in the sense of
distributions on $[0,\infty)$. 
Hence, by \eqref{eq:avetest} and \eqref{eq:weakentropy}, and assuming
$u=\lim_{\alpha\to 0}\ua$, we formally have that
\begin{align*}
  &\int_0^\infty
    \!\!\!\int_{\R^d} \eta(\ua(\bx,t))\pt\vfi\,dtd\bx
    +\int_{\R^d} \eta({u}_{0}(\bx))\vfi(\bx,0)\,d\bx 
  \\ & \quad \ge\int_0^\infty \!\!\!\int_{\R^d}\int_{S^{d-1}}\Biggl(
       \frak{q}^+\left(\bn_\theta,\ua(\bx,t)\right) \bn_\theta
       \cdot\left(\int_0^\infty \Phi_\alpha(r) \nabla
       \test\left(\bx-r\bn_\theta,t\right)\,dr\right)\\
  &\hphantom{=- \!\!\!\int_{\R^d}}\quad
    -\frak{q}^-\left(\bn_\theta,\ua(\bx,t)\right) \bn_\theta
    \cdot \left(\int_0^\infty \Phi_\alpha(r)
    \nabla\test\left(\bx+r\bn_\theta,t\right)\,dr\right)
    \Biggr)\,dw(\theta)\,d\bx dt
  \\ &\hphantom{=- \!\!\!\int_{\R^d}
       -\frak{q}^-\left(\bn_\theta,\ua(\bx,t)
       \right) \bn_\theta\cdot}\quad
       \bigg\downarrow \qquad\ \text{($\alpha\to 0$)} 
  \\ & \int_0^\infty 
       \!\!\!\int_{\R^d} \eta(u(\bx,t))\pt\vfi\,dtd\bx
       +\int_{\R^d} \eta({u}_{0}(\bx))\vfi(\bx,0)\,d\bx 
  \\ & \, \, 
       \ge\int_0^\infty \!\!\!\int_{\R^d}\int_{S^{d-1}}
       \Bigl( \frak{q}^+\left(\bn_\theta,u(\bx,t)\right)
       -\frak{q}^-\left(\bn_\theta,u(\bx,t)\right) \Bigr)
       \left( \bn_\theta\cdot \nabla \test(\bx,t)\right) 
       \,dw(\theta)\,d\bx dt
  \\ & \,\, 
       =\int_0^\infty \!\!\!\int_{\R^d}\int_{S^{d-1}}
       \Bigl(\bn_\theta\cdot \bm{Q}(u(\bx,t)) \Bigr)
       \left( \bn_\theta\cdot
       \nabla\test(\bx,t) \right)
       \,dw(\theta)\,d\bx dt
  \\ & \,\, =\int_0^\infty \!\!\!\int_{\R^d}
       \Bigl(\sum_{i,j=1}^d
       q_i(u(\bx,t))\left\{\int_{S^{d-1}}
       \left(\bn_\theta\cdot \bm{e}_i\right)
       \left(\bn_\theta\cdot \bm{e}_j\right)
       \,dw(\theta)\right\}\,
       \test_{x_j}(\bx,t)\Bigr)\,d\bx dt
  \\ & \,\, =\int_0^\infty \!\!\!\int_{\R^d}
       \bm{Q}(u(\bx,t))\cdot\nabla\test(\bx,t)
       \,d\bx dt,
\end{align*}
where we have used the symmetry of the measure $w$, concretely
equation \eqref{eq:scaling}. We have also 
introduced the standard \textit{entropy flux}
$\bm{Q}=(q_1,\ldots,q_d):\R\to \R^d$ (for local conservation laws)
that corresponds to an entropy function $\eta:\R\to \R$, defined as
follows:
$$
q_i'=\eta' f_i', \quad i=1,\ldots,d.
$$ 
Thus, if $\ua$ converges strongly to $u$ as $\alpha\to 0$, then $u$ is
an entropy solution to the local conservation law.  For the reader’s
convenience, we define an entropy solution.

\begin{definition}
  A function $u\in C([0,T];L^1(\R^d)) \cap L^\infty(\R^d\times \R^+)$,
  for all $ T>0$, is a solution of the local conservation law
  \begin{equation}\label{eq:conslaw}
    \pt u = \aud\bm{F}(u),
    \qquad 
    u(\bx,0)=u_0(\bx),
  \end{equation}
  if, for any convex entropy/entropy flux pair $(\eta,\bm{Q})$, the
  inequality
  \begin{equation*}
    \pt \eta(u)\le \dv{\bm{Q}(u)}
  \end{equation*}
  holds in the distributional sense in $\R^d\times [0,\infty)$.
\end{definition}

The following theorem guarantees the strong convergence of $\ua$ as
the filter size $\alpha$ tends to zero.

\begin{theorem}\label{th:mainCL}
Suppose the filter $\Phi$ is a non-negative and non-increasing
function on $\R^+$ that satisfies \eqref{eq:Phi-r},
\eqref{eq:Phi-zero}.  For every
$u_0\in (L^1\cap L^\infty\cap BV)(\R^d)$, we have that
\begin{equation}\label{eq:convmain}
	\text{$u_{\alpha}\to u$ 
	in $C([0,T];L^1(\R^d))$,\,\,\, for all $T>0$},         
\end{equation}
where $u_{\alpha}$ is the solution of the non-local conservation law
\eqref{eq:rotfilter2} with initial data $u_{0}$ and rescaled filter
\eqref{eq:scalePhi}, and $u$ is the unique entropy solution of
\eqref{eq:conslaw} with the same initial data $u_{0}$.  Moreover,
the following error estimate holds:
\begin{equation}\label{eq:error}
	\norm{\ua(\cdot,t)-u(\cdot,t)}_{L^1(\R^d)}
	\le C\abs{u_0}_{BV(\R^d)}\sqrt{\alpha t},
\end{equation}
for a constant $C$ that does not depend on $u$.
\end{theorem}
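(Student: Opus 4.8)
The plan is to derive Theorem~\ref{th:mainCL} as a direct consequence of the continuous dependence estimate \eqref{eq:Phistab} together with the consistency computation sketched above. The key observation is that \eqref{eq:Phistab} compares two \emph{non-local} solutions with filters $\Phi$ and $\Psi$, and is robust in the ``distance'' $\int_0^\infty r\abs{\Phi(r)-\Psi(r)}\,dr$ between the filters; the idea is to take $\Phi$ there to be the rescaled filter $\Phi_\alpha$ and to let $\Psi$ degenerate to the Dirac delta $\delta_0$, for which the non-local equation \eqref{eq:rotfilter2} formally becomes the local conservation law \eqref{eq:conslaw}.

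First I would make precise the ``zero-filter'' endpoint. Since the Dirac delta is not a filter in the sense of the paper, I would instead run the argument of the proof of \eqref{eq:Phistab} directly with $v$ replaced by the entropy solution $u$ of \eqref{eq:conslaw}: the functional $\Lambda(\Phi,\phi,u,k)$ and its $\Gamma$-term were built precisely so that, when $\phi$ is a mollifier and one sends the filter to $\delta_0$, the term $\Gamma$ collapses (via \eqref{eq:scaling}) to the usual local Kruzhkov entropy dissipation $\int\int \bm{Q}(u,k)\cdot\nabla\phi$, which is $\le 0$ by the definition of entropy solution. Concretely, I would verify that $\Gamma(\delta_0,\phi,u,k)$ — interpreted as $\int_0^\infty r\,\delta_0(r)\,(\cdots)\,dr$, i.e. the $r\to 0$ limit — equals the local entropy-flux expression, so that $\Lambda(\delta_0,\test(\cdot,\bz,\cdot,s),u,k)\le 0$ for the entropy solution. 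With that in hand, the entire chain of estimates \eqref{eq:t1}--\eqref{eq:t4} in the proof of \eqref{eq:Phistab} goes through verbatim with $\Psi=\delta_0$ and the product $r\Psi(r)$ read as the zero function, giving
\begin{equation*}
  \norm{\ua(\cdot,T)-u(\cdot,T)}_{L^1(\R^d)}
  \le C\Bigl(\eps+\frac{1}{\eps}\int_0^\infty
  r\,\Phi_\alpha(r)\,dr\Bigr).
\end{equation*}
A change of variables $r\mapsto\alpha r$ shows $\int_0^\infty r\,\Phi_\alpha(r)\,dr=\alpha\int_0^\infty r\,\Phi(r)\,dr$, which is finite by \eqref{eq:Phi-m} (note \eqref{eq:Phi-zero} implies the weaker hypotheses needed, and boundedness of the first moment is what makes the bound quantitative). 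Optimizing over $\eps$ yields $\norm{\ua(\cdot,T)-u(\cdot,T)}_{L^1}\le C\abs{u_0}_{BV}\sqrt{\alpha T}$, which is \eqref{eq:error}; replacing $T$ by an arbitrary $t\in(0,T]$ and noting the bound is uniform in $t$ gives \eqref{eq:convmain}, i.e. convergence in $C([0,T];L^1(\R^d))$ (continuity of the limit in time is inherited from the entropy solution $u$, and of $\ua$ from Proposition~\ref{prop:L1exist} and Remark~\ref{rem:L1-L1loc}).

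\textbf{The main obstacle} I expect is the rigorous justification of the $\delta_0$ endpoint: one must check that the entropy solution $u$ of \eqref{eq:conslaw} really does satisfy $\Lambda(\delta_0,\test,u,k)\le 0$, i.e. that the $r\to 0^+$ limit of the smeared non-local entropy-flux functional $\Gamma$ reproduces exactly the local Kruzhkov dissipation — this requires the normalization \eqref{eq:scaling} of the directional measure $w$, the decomposition $\bm{F}=\bm{F}^++\bm{F}^-$ into $\frak{f}^\pm$ along each direction, and a Taylor expansion $\nabla\phi(\bx\mp r\bn_\theta)=\nabla\phi(\bx)\mp r\,(\bn_\theta\cdot\nabla)\nabla\phi(\bx)+O(r^2)$ inside the $r$-integral against $\Phi_\alpha$, with the linear term vanishing after integrating over $S^{d-1}$ by the symmetry encoded in \eqref{eq:scaling} and the $O(r^2)$ term contributing $O(\alpha^2)$ which is absorbed. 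A secondary technical point is the commuting of limits $\eps_0\to 0$, $\eps\to 0$, $\alpha\to 0$ and the handling of the $1/\eps$ factor coming from $\int_{\R^d}\abs{\nabla\bm{\omega}_\eps(\bx-\bz)}\,d\bz=d^{3/2}\cdot 2\omega(0)/\eps$; but this is already carried out in the proof of \eqref{eq:Phistab}, so here one only needs to observe that taking $\Psi=\delta_0$ does not disturb those steps, since $\Psi$ enters solely through the scalar weight $\int_0^\infty r\abs{\Phi_\alpha(r)-\Psi(r)}\,dr=\alpha\int_0^\infty r\Phi(r)\,dr$. Everything else — the $L^\infty$, $BV$, and time-Lipschitz bounds on $\ua$ uniform in $\alpha$ — is supplied by Theorem~\ref{thm:properties}, so no compactness argument is actually needed; the error estimate does all the work.
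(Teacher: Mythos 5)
Your proposal is correct in substance, but it is not the route the paper's own proof takes — interestingly, it is the route the \emph{introduction} advertises. The paper never touches the singular endpoint $\Psi=\delta_0$: it applies the already-established continuous dependence estimate \eqref{eq:Phistab} to two genuine non-local solutions $u_\alpha$ and $u_\beta$, obtaining
$\norm{u_\alpha(\cdot,t)-u_\beta(\cdot,t)}_{L^1}^2\le Ct\int_0^\infty r\left(\Phi_\alpha(r)+\Phi_\beta(r)\right)dr=Ct(\alpha+\beta)\int_0^\infty r\Phi(r)\,dr$,
so that $\seq{u_{\alpha_k}}$ is Cauchy in $C([0,T];L^1(\R^d))$; the limit is then identified as the Kruzhkov entropy solution by passing to the limit in the weak entropy formulation \eqref{eq:weakentropy} (the computation displayed at the start of Section \ref{sec:filtlim}, made rigorous by the strong convergence), and the rate \eqref{eq:error} falls out afterwards by sending $\beta\to0$ in the two-parameter bound. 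Your approach instead re-runs the Kuznetsov argument with one slot occupied by the local entropy solution and $\Psi=\delta_0$. This works and is more direct (one comparison instead of Cauchy-plus-identification), but it obliges you to verify that the local entropy solution satisfies $\Lambda(\delta_0,\cdot,u,k)\le 0$ and that every step of the proof of \eqref{eq:Phistab} survives the degeneration, and it presupposes Kruzhkov existence for \eqref{eq:conslaw} rather than obtaining it as a by-product; the paper's detour buys exactly the avoidance of that verification.

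Two corrections to your ``obstacle'' discussion. First, the endpoint verification is easier than you suggest: $\Gamma(\delta_0,\phi,u,k)$ is literally the $r=0$ evaluation of the integrand, and the identification with the Kruzhkov dissipation via $\int_{S^{d-1}}\left(\bn_\theta\cdot\bm{Q}\right)\left(\bn_\theta\cdot\nabla\phi\right)dw(\theta)=\bm{Q}\cdot\nabla\phi$ is purely algebraic from \eqref{eq:scaling} — no Taylor expansion is involved. Second, your claim that the linear term of the expansion of $\nabla\phi(\bx\mp r\bn_\theta)$ ``vanishes after integrating over $S^{d-1}$ by the symmetry encoded in \eqref{eq:scaling}'' is unjustified: \eqref{eq:scaling} normalizes the second moments of $w$ and does not force $\int_{S^{d-1}}\bn_\theta\,dw(\theta)=\bm{0}$. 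Fortunately no such cancellation is needed: the Kuznetsov machinery bounds the difference $\bm{\omega}_\eps(\bx-\bz\mp r\bn_\theta)-\bm{\omega}_\eps(\bx-\bz)$ crudely by a factor of $r$, which is precisely what pairs with $\int_0^\infty r\abs{\Phi_\alpha(r)-\Psi(r)}\,dr$ and yields the $\sqrt{\alpha}$ rate after optimizing in $\eps$. Finally, note that the quantitative estimate genuinely requires the finite first moment \eqref{eq:Phi-m} (as does the paper's Cauchy argument), even though the theorem statement lists only \eqref{eq:Phi-r} and \eqref{eq:Phi-zero}; you are right to invoke it.
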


\begin{proof}
  We start by proving \eqref{eq:convmain}.  We specify
  $\Phi=\Phi_\alpha$ and $\Psi=\Phi_\beta$ in the continuous
  dependence estimate \eqref{eq:Phistab} to obtain
  \begin{align}
    \norm{\ua(\cdot,t)-u_\beta(\cdot,t)}^2_{L^1(\R^d)}
    &\le Ct\int_0^\infty 
      r\abs{\Phi_\alpha(r)-\Phi_\beta(r)}
      \,dr\notag 
    \\ &\le Ct\int_0^\infty 
         r\left(\Phi_\alpha(r)+\Phi_\beta(r)\right)\,dr
         \label{eq:u1}
    \\ &\le 2Ct\int_0^\infty r
         \Phi_{\max\seq{\alpha,\beta}}(r)\,dr 
         \ \longrightarrow 0,\notag
  \end{align}
  as $\max\seq{\alpha,\beta}\to 0$.

  Thus, for any sequence $\seq{\ak}$ with $\ak\to 0$ as $k\to\infty$,
  $\seq{\uak}_{\ak}$ is a Cauchy sequence in
  $C([0,T];L^1(\R^d))$.  Note that since $\uak\to u$ strongly,
  \begin{equation*}
    \frak{q}^\pm(\by,\uak)\to \frak{q}^\pm(\by,u)\ \ 
    \text{in $C([0,T];L^1(\R^d))$}.
  \end{equation*}
  Then we can proceed as indicated above to conclude that the limit
  $u$ is the entropy solution to the local conservation law
  \eqref{eq:conslaw}.

  To prove the convergence rate \eqref{eq:error}, observe that
  \begin{equation*}
    \int_0^\infty r\Phi_\beta(r)\,dr
    =\beta \int_0^\infty r\Phi(r)\,dr.
  \end{equation*}
  Consequently, \eqref{eq:u1} implies that
  \begin{equation*}
    \norm{\ua(\cdot,t)-u_\beta(\cdot,t)}^2_{L^1(\R^d)}
    \le C t\left(\alpha+\beta\right)
    \int_0^\infty r\Phi(r)\,dr.
  \end{equation*}
  Sending $\beta\to 0$ gives the desired estimate.
\end{proof}

\section{Examples with links to numerical methods}
\label{sec:examp}

Upwind filtered conservation laws \eqref{eq:rotfilter2} can serve as
non-local models in various contexts (see the discussion in the
introduction). However, the goal of this section is to demonstrate how
certain well-known numerical methods emerge as specific instances of
\eqref{eq:rotfilter2}.

Recall that the Engquist-Osher flux \cite{EngquistOsher:81}
$f^\EO:\R^2\to \R$ takes the form
\begin{equation*}
  f^\EO(a,b)=f^+(a)+f^-(b),
\end{equation*}
where the monotone functions $f^\pm$ are defined in
\eqref{eq:EO-split}.  Let $\Phi_\alpha$ be the rescaled filter from
\eqref{eq:scalePhi}. In general, $\auda\bm{F}(u)$ can be rewritten as
follows:
\begin{align*}
  \auda \bm{F}(u)(\bx)
  & =\int_{\R^d}
    \frac{\Phi_\alpha'(\abs{\by})}{\abs{\by}^{d-1}}
    \Bigl\{[\bn_\theta\cdot\bm{F}]^-(u(\bx-\by))
    +[\bn_\theta\cdot\bm{F}]^+(u(\bx))
  \\ &\hphantom{\int_{\R^d}
       \frac{\Phi_\alpha'(\abs{\by})}{\abs{\by}^d}
       \Bigl\{}\quad
       -[\bn_\theta\cdot\bm{F}]^-(u(\bx))
       +[\bn_\theta\cdot\bm{F}]^+(u(\bx+\by))
       \Bigr\}\,d\bm{w}(\by)
  \\ &=- \int_{\R^d}
       \frac{\Phi_\alpha'(\abs{\by})}{\abs{\by}^{d-1}}
       \Bigl\{[\bn_\theta\cdot\bm{F}]^\EO\!\!\left(u(\bx),
       u(\bx+\by)\right)
  \\ &\hphantom{\int_{\R^d}
       \frac{\Phi_\alpha'(\abs{\by})}{\abs{\by}^d}
       \Bigl\{}\qquad
       -[\bn_\theta\cdot\bm{F}]^\EO\!\!
       \left(u(\bx-\by),u(\bx)\right)\Bigr\}\,d\bm{w}(\by)
  \\ & =-\int_0^\infty \Phi'_\alpha(r)\int_{S^{d-1}}
       [\bn_\theta\cdot\bm{F}]^\EO
       \!\!\left(u(\bx),u(\bx+\bn_\theta r)\right)
  \\ &\hphantom{-\int_0^\infty 
       \Phi_\alpha(r)\int_{S^{d-1}}}\quad
       -[\bn_\theta\cdot\bm{F}]^\EO\!\!
       \left(u(\bx-\bn_\theta r),u(\bx)\right)\,dw(\theta)dr,
\end{align*}
for an integrable function $u=u(\bx)$.  This formulation highlights
the connection between the non-local divergence operator $\auda$ and
numerical (difference) schemes for conservation laws.  Below we will
illustrate how specific choices of the filter $\Phi$ and the measure
$w$ can lead to practical schemes.

Note that the above formula ``integrates everything twice'', and can
be simplified using
\begin{equation*}
  [-\bn_\theta\cdot \bm{F}]^\EO(a,b)
  =-[\bn_\theta\cdot \bm{F}]^\EO(b,a),
\end{equation*}
to
\begin{align*}
  \auda \bm{F}(u)(\bx) 
  & =-\int_0^\infty \Phi'_\alpha(r)\int_{S^{d-1}_+}
  [\bn_\theta\cdot\bm{F}]^\EO\!\!
  \left(u(\bx),u(\bx+\bn_\theta r)\right)
  \\ &\hphantom{-\int_0^\infty 
  \Phi_\alpha(r)\int_{S^{d-1}}}\quad
  -[\bn_\theta\cdot
  \bm{F}]^\EO\!\!\left(u(\bx-\bn_\theta r),u(\bx)\right)
  \,d\omega(\theta)dr,
\end{align*}
where the ``omega" measure $\omega$ is defined as
$\omega(U)=w(U)+w(-U)$ for any set $U\subset S^{d-1}_+$, and
$S^{d-1}_+$ denotes the half-sphere
\begin{equation*}
  S^{d-1}_+=\seq{\bx\in S^{d-1}\, \bigm|\, x_d\ge 0}.
\end{equation*}

\subsection{One space dimension}\label{subsec:1d}

If $d=1$, then $\bm{F}(u)=f_1(u) =:f(u)$, $\bm{n}=\seq{1,-1}$, and
\begin{equation*}
  \frak{f}^{\pm}(\by,u)
  =\pm\abs{y}\left[\bm{n}f\right]^\pm(u).
\end{equation*}
Furthermore, $S^0=\seq{1,-1}$ and ${w}$ is the counting measure
divided by two, so that $w(S^0)=1$.  Thus the filtered upwind
divergence of $f(u)(x)$ reads
\begin{equation*}
  \begin{aligned}
    \auda{f(u)}(x) &=-\int_\R\Phi_\alpha'(\abs{y})
    \bigl\{\left[\bm{n}f\right]^+(u(x))
    -\left[\bm{n}f\right]^+(u(x+y)) \\ &\hphantom{=-\sum_{\bm{n}\in
        S^0} \int_\R\Phi_\alpha'(\abs{y}) \bigl\{}
    -\left[\bm{n}f\right]^-(u(x))
    +\left[\bm{n}f\right]^-(u(x-y))\bigr\}\,d\bm{w}(y).
  \end{aligned}
\end{equation*}

Choosing $\Phi$ to be the characteristic function of the interval
$[0,1]$, we obtain
\begin{equation*}
  -\int_0^\infty \Phi_\alpha'(r) h(r)\,dr 
  = \frac{h(\alpha)}{\alpha}
\end{equation*}
if $h$ is continuous. We know that $\ua$ is continuous for $\alpha>0$,
so that in this case \eqref{eq:rotfilter2} becomes
\begin{equation*}
  \pt \ua(x,t)
  =\frac{1}{\alpha}\left(f^\EO(\ua(x,t),\ua(x+\alpha,t))
    -f^\EO(\ua(x-\alpha,t),\ua(x,t))\right),
\end{equation*}
which is a semi-discrete form of the Engquist-Osher scheme
\cite{EngquistOsher:81}.

Other choices of $\Phi$ can yield other schemes.  If we choose
$\Phi(r)=\max\seq{2(1-r),0}$, then
\begin{equation*}
  -\int_0^\infty \Phi_\alpha'(r) h(r)\,dr =
  \frac{2}{\alpha^2}\int_0^\alpha h(r)\,dr =
  \frac{2}{\alpha}\left(\frac1\alpha\int_0^\alpha
    h(r)\,dr\right)=:
  \frac{2}{\alpha} \overline{h}.
\end{equation*}
This gives the scheme
\begin{align*}
  \pt \ua(x,t)
  = \frac{f^+(\ua(x,t))
  -\ob{f^+(\ua(x-\cdot,t))}}{\alpha/2}
  +\frac{\ob{f^-(\ua(x+\cdot,t))}
  -f^-(\ua(x,t))}{\alpha/2},
\end{align*}
where, in practice, the averaging integrals would typically be
approximated using a quadrature rule.

\subsection{Two space dimensions}\label{subsec:2d}

These examples can be extended to the case $d>1$. For $d=2$, let $w$
denote the point measure concentrated at the points
\begin{equation*}
  \bn_1=(1,0),\, \, \bn_2=(0,1),\,\,
  \bn_3=-\bn_1,\,\, 
  \text{and}\,\, \bn_4=-\bn_2,
\end{equation*}
scaled such that \eqref{eq:scaling} holds, which implies that
$w(S^1)=2$.  In this scenario, the upwind divergence
\eqref{eq:non-local-div} simplifies to
\begin{align*}
  \auda \bm{F}(u)(\bx)
  &=-\frac12 \int_0^\infty \Phi_\alpha'(r)
    \Bigl\{ f_1^\EO(u(\bx),u(\bx+r\bn_1))
    -f_1^\EO(u(\bx-r\bn_1),u(\bx))
  \\ &\hphantom{\int_0^\infty
       \Phi_\alpha'(r)}\quad
       +f_2^\EO(u(\bx),u(\bx+r\bn_2))
       -f_2^\EO(u(\bx-r\bn_2),u(\bx))\Bigr\}\,dr,
\end{align*}
for an integrable function $u=u(x)$.  If we select $\Phi$ as the
characteristic function of $[0,1]$, we obtain the conventional
Engquist-Osher scheme in two dimensions on a square grid.

This approach works also works for equilateral hexagons. Let
\begin{equation*}
  \bn_i=\bigl(\cos(\pi(i-1)/3),
  \sin(\pi(i-1)/3)\bigr),
  \quad i=1,\ldots,6.
\end{equation*}
In this case, we set $w = \frac{1}{3} \sum_{i=1}^6 \delta_{\bn_i}$ to
ensure that $w(S^1) = 2$. Consequently, the average upwind divergence
of $\bm{F}(u)(\bx)$ transforms into
\begin{align*}
  \auda \bm{F}(u)(\bx) 
  &=-\frac13 \int_0^\infty \Phi_\alpha'(r)
    \Biggl\{\sum_{i=1}^3
    \left(\bn_i\cdot\bm{F}\right)^\EO(u(\bx),u(\bx+r\bn_i))
  \\ &\hphantom{=\frac13 \int_0^\infty 
       \Phi_\alpha'(r)\sum_{i=1}^3\Bigl\{}\quad 
       -\left(\bn_i\cdot\bm{F}\right)^\EO
       (u(\bx-r\bn_i),u(\bx))\Biggr\}\,dr.
\end{align*}
It is (perhaps) surprising that if we choose the measure $w$ to have
support on the corners of an equilateral triangle, and $\Phi$ to be
the characteristic function of the unit interval, we recover the
scheme for hexagons.  In Figure~\ref{fig:grids}, we have shown the
stencils for the filter $\Phi= \chi_{[0,1]}$. We note that the
resulting schemes take the form of finite volume schemes rather than
finite difference schemes.  The extension of these constructions to
$d>2$ is obvious, but complicated.

\begin{figure}[h!] \label{fig:grids} \centering
  \begin{tabular}[h!]{lcr}
    \includegraphics[width=0.30\linewidth]{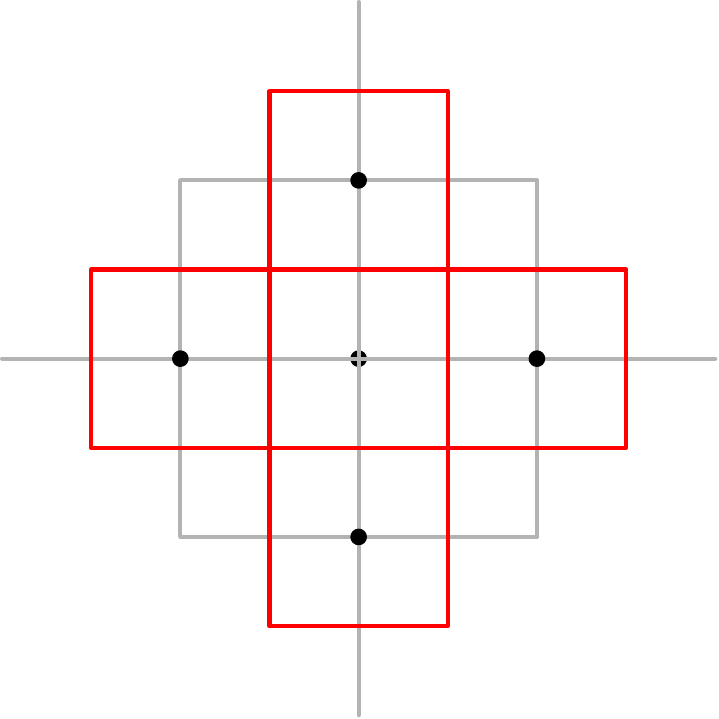}
    &\includegraphics[width=0.30\linewidth]{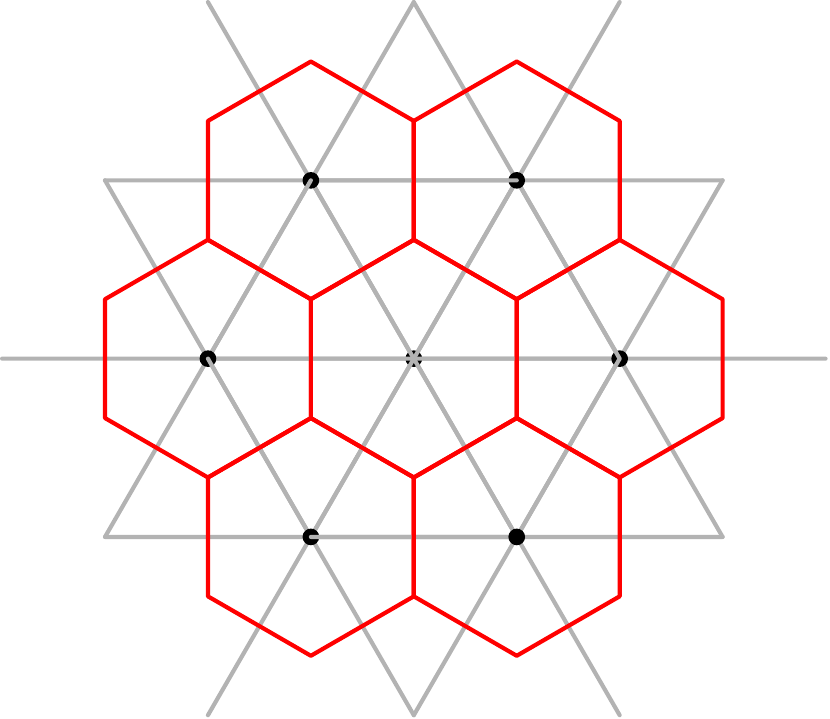}
    &\includegraphics[width=0.30\linewidth]{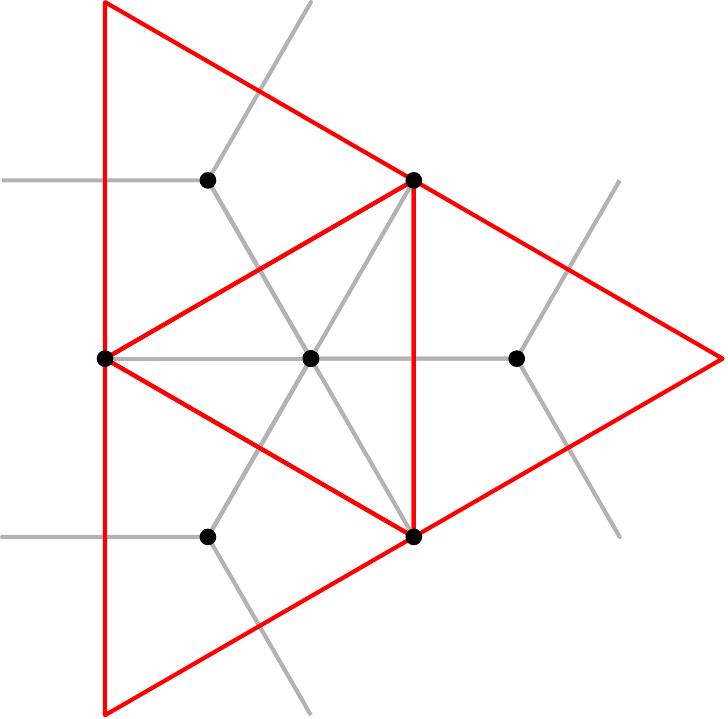}
  \end{tabular}
  \caption{Stencils for the schemes obtained when $w$ is a point
    measure concentrated at points on the unit circle and
    $\Phi= \chi_{[0,1]}$.  Left: points on a square, middle: points on
    a hexagon, and right: points on a triangle. The finite volumes are
    marked in red and the grid in gray.}
\end{figure}

\section{Unfiltered variables}\label{sec:bonus}

In this final section, we will discuss in more detail the 
one-dimensional case of \eqref{eq:suggestive}, where 
the filtered conservation law \eqref{eq:non-local-CL} 
assumes the following form:
\begin{equation}\label{eq:1D-bonus}
	\partial_t u= \partial_x A^+_\alpha(f^+(u))
	+\partial_xA^-_\alpha (f^-(u)).
\end{equation}
We will focus on the kernel 
$\Phi_\alpha(\zeta)=e^{-\zeta/\alpha}/\alpha$,
which is similar to those used in 
\cite{Bressan:2020aa,Bressan:2021aa,Coclite:2023ab}. 
With this kernel, the non-local 
operators \eqref{eq:1D-average} and \eqref{eq:A-} 
transform into the following forms:
\begin{align*}
	& A_\alpha^+v(x) := \int_0^\infty
	\frac{e^{-\zeta/\alpha}}{\alpha}v(x+\zeta)\,d\zeta
	=\int_x^\infty \frac{e^{(x-\zeta)/\alpha}}{\alpha}
	v(\zeta) \, d\zeta,
	\\ & 
	A_\alpha^-{v}(x) 
	= \int_{-\infty}^0 \frac{e^{\zeta/\alpha}}{\alpha}
	v(x+\zeta)\,d\zeta
	=\int^x_{-\infty} \frac{e^{-(x-\zeta)/\alpha}}{\alpha}
	v(\zeta)\, d\zeta.
\end{align*}
Introducing the first order differential operators
\begin{equation*}
	\mathbb{A}^+_\alpha
	=-\alpha\px+I,\qquad 
	\mathbb{A}^-_\alpha=\alpha\px+I,
\end{equation*}
we can easily verify that
$$
(\mathbb{A}_\alpha^+)^{-1}v=A_\alpha^+v, 
\qquad 
(\mathbb{A}_\alpha^-)^{-1}v=A_\alpha^-v.
$$
Simple computations reveal that 
\begin{equation}\label{eq:bonus-relations}
	\begin{split}
		&\mathbb{A}^+_\alpha
		\circ (\mathbb{A}^-_\alpha)^{-1} v
		=\mathbb{A}^+_\alpha\circ A^-_\alpha v
		=-v+2 A^-_\alpha v,
		\\
		&\mathbb{A}^-_\alpha \circ 
		(\mathbb{A}^+_\alpha)^{-1}v
		=\mathbb{A}^-_\alpha\circ A^+_\alpha v
		=-v+2 A^+_\alpha v.
	\end{split}
\end{equation}
Applying $\mathbb{A}_\alpha^+$ and $\mathbb{A}_\alpha^-$ 
separately to \eqref{eq:1D-bonus} and utilizing the 
relations \eqref{eq:bonus-relations}, we obtain the 
following two equations:
\begin{align*}
	& \pt \mathbb{A}_\alpha^+(u)
	=\px \bigl(f_+(u)-f^-(u)\bigr)
	+2\px A^-_\alpha(f^-(u)),
	\\
	& \pt(A_\alpha^-u)=
	\px  \bigl( f^-(u)-f_+(u)\bigr)
	+2\px A^+_\alpha(f_+(u)),
\end{align*}
noting that
$$
f^+(u) - f^-(u) = \int_0^{u} 
\abs{f'(v)} \, dv:=\widetilde{f}(u).
$$
Finally, we introduce notations for the “inverse-filtered” 
unknowns $\mathbb{A}^\pm_\alpha (u)$:
\begin{equation*}
	U^+:=\mathbb{A}^+_\alpha (u),
	\qquad 
	U^-=\mathbb{A}^-_\alpha(u),
\end{equation*}
which leads to following system of equations:
\begin{equation}\label{eq:bonus-non-filter}
	\begin{split}
		&\pt U^+
		=\px \left( \widetilde{f}(u)
		+2A^-_\alpha (f^-(u))\right),
		\\ & 
		\pt U^-
		=-\px \left(\widetilde{f}(u) 
		+2A^+_\alpha(f_+(u))\right).
	\end{split}
\end{equation}
Consider now the case where $f$ is a non-decreasing function, so 
that $f^- = 0$ and $\tilde{f} = f$. Then the 
first equation in \eqref{eq:bonus-non-filter} simplifies to
$$
\partial_t U^+ = \partial_x f(u),
$$
where $U^+$ should be regarded as the unfiltered 
variable, and $u$ should be considered as the filtered variable 
($u = A^+_\alpha(U^+)$). This corresponds to the traffic flow 
equations \eqref{eq:y-nofilter} and \eqref{eq:1D-filter-prev} 
derived in \cite{Coclite:2023aa}. 
Similarly, if $f$ is non-decreasing, applying the 
second equation in \eqref{eq:bonus-non-filter} gives us:
$$
\partial_t U^-_\alpha = \partial_x f(u).
$$
This approach of ``inverting" the non-local operator 
appears to be effective only in the 
case of an exponential kernel.

\section*{Acknowledgments}

{\small GMC is member of the Gruppo Nazionale per l'Analisi Matematica, la
Probabilit\`a e le loro Applicazioni (GNAMPA) of the Istituto
Nazionale di Alta Matematica (INdAM). GMC has been partially supported
by the Project funded under the National Recovery and Resilience Plan
(NRRP), Mission 4 Component 2 Investment 1.4 -Call for tender No. 3138
of 16/12/2021 of Italian Ministry of University and Research funded by
the European Union -NextGenerationEUoAward Number: CN000023,
Concession Decree No. 1033 of 17/06/2022 adopted by the Italian
Ministry of University and Research, CUP: D93C22000410001, Centro
Nazionale per la Mobilit\`a Sostenibile, the Italian Ministry of
Education, University and Research under the Programme Department of
Excellence Legge 232/2016 (Grant No. CUP - D93C23000100001), and the
Research Project of National Relevance ``Evolution problems involving
interacting scales'' granted by the Italian Ministry of Education,
University and Research (MIUR Prin 2022, project code 2022M9BKBC,
Grant No. CUP D53D23005880006). GMC expresses his 
gratitude to the Department of Mathematics at the 
University of Oslo and the Hamburg Institute for 
Advanced Study (HIAS) for their warm hospitality. This 
research was funded by the Research Council of Norway
under project 351123 (NASTRAN).}


\bibliographystyle{abbrv}

 \end{document}